\newtheorem{theorem}{Theorem}[section]
\newtheorem{proposition}[theorem]{Proposition}
\newtheorem{notation}[theorem]{Notation}
\newtheorem{lemma}[theorem]{Lemma}
\newtheorem{conjecture}[theorem]{Conjecture}
\newtheorem{question}[theorem]{Question}
\numberwithin{equation}{section}
\newcommand{\bQ}{\mathbb{Q}}
\newcommand{\bF}{\mathbb{F}}
\def\imod#1{\allowbreak\mkern3mu({\operator@font mod}\,\,#1)}
\title{Artin Prime Producing Polynomials}
\author{Amir Akbary and Keilan Scholten}
\date{\today}
\thanks{Research of the first author was  supported by NSERC. Research of the second author was supported by an NSERC USRA award.}
\keywords{\noindent Artin's primitive root conjecture, prime producing polynomials}
\subjclass[2010]{11A07, 11N32}
\address{Department of Mathematics and Computer Science \\
        University of Lethbridge \\
        Lethbridge, AB T1K 3M4 \\
        Canada}
\email{amir.akbary@uleth.ca}
\address{Department of Mathematics and Computer Science \\
        University of Lethbridge \\
        Lethbridge, AB T1K 3M4 \\
        Canada}
\email{Keilan.Scholten@outlook.com}
\begin{document}

\begin{abstract}
We define an Artin prime for an integer $g$ to be a prime such that $g$ is a primitive root modulo that prime. Let $g\in \mathbb{Z}\setminus\{-1\}$ and not be a perfect square. A conjecture of Artin states that the set of Artin primes for $g$ has a positive density. 
In this paper we study a generalization of this conjecture for the primes produced by a polynomial and explore its connection with the problem of finding a fixed integer $g$ and a prime producing polynomial $f(x)$ with the property that a long string of consecutive primes produced by $f(x)$ are Artin primes for $g$. 
By employing some results of Moree, we propose a general method for finding such polynomials $f(x)$ and integers $g$. We then apply this general procedure for linear, quadratic, and cubic polynomials to generate many examples of polynomials with very large Artin prime production length. More specifically, among many other examples, we exhibit linear, quadratic, and cubic (respectively) polynomials with  $6355$, $37951$, and $10011$ (respectively) 
consecutive Artin primes for certain integers $g$.
\end{abstract}

\maketitle

\section{Introduction}
\label{Introduction}
We define an Artin prime for an integer $g$ (for simplicity called an Artin prime) to be a prime $p$ with the property that $g$ is a primitive root modulo $p$. Let $g\in \mathbb{Z}\setminus\{-1\}$ and not be a perfect square. A celebrated conjecture of Artin states that the set of Artin primes for $g$ has a positive density. More generally for a fixed integer $g$ if we set
$$\delta_g(x):=\frac{\#\{p\leq x;~p~ \text{is an Artin prime for}~g\}}{\#\{p\leq x;~p~\text{prime}\}},$$
then the  conjecture predicts that $$\delta_g=\lim_{x\rightarrow \infty} \delta_g(x)$$
exists. Also the conjecture states that if $g=g_1 g_2^2$ is not a perfect power and its square-free part $g_1\not\equiv 1 \imod{4}$ then
$$\delta_g=A=\prod_{q~\text{prime}} \left(1-\frac{1}{q(q-1)}  \right)=0.373955813\ldots\approx \frac{3}{8}.$$
Moreover if $g={-1}$ or a perfect square then $\delta_g=0$ and 
in all other cases $\delta_g$ is a positive constant that depends on $g$ and also is a rational multiple of $A$. The absolute constant $A$ is called  Artin's constant. Artin's conjecture is unresolved. In 1967 Hooley \cite{Hoo} proved it conditionally under the assumption that for every square-free $d$ the Dedekind zeta function of the 
Kummerian fields $\mathbb{Q}(e^{2\pi i/d}, g^{1/d})$ satisfies the generalized Riemann hypothesis.

In this paper we consider a generalization of Artin's conjecture for the primes generated by polynomials with integer coefficients. For prime $q$ let
$$N_q(f)=\#\{n \imod{q};~f(n)\equiv 0 \imod{q}\}.$$ It is easy to show that if a polynomial $f(x)$ produces infinitely many primes for values $n\in \mathbb{Z}^+$, then the following three conditions hold:

(i) The leading coefficient of $f(x)$ is positive.

(ii) $f(x)$ is irreducible over $\mathbb{Z}$.



(iii) There is no prime $q$ such that $N_q(f)=q$.

\noindent An old conjecture due to Bouniakowsky \cite{B} states that the above three conditions are also sufficient.

\begin{conjecture}[{\bf Bouniakowsky}]
\label{Boun}
A polynomial $f(x)\in \mathbb{Z}[x]$ produces infinitely many primes if and only if (i), (ii), and (iii) hold.
\end{conjecture}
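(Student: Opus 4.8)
The statement is an ``if and only if,'' and only one direction can actually be proved with current technology; the converse is the genuine content of the conjecture and is open, so the plan splits accordingly. Throughout I work with non-constant $f$ (the usual convention in which ``produces infinitely many primes'' means infinitely many values $f(n)$, $n\in\mathbb{Z}^+$, that are prime), and I use the elementary facts that a non-constant $f\in\mathbb{Z}[x]$ takes any fixed integer value only finitely often and that $|f(n)|\to\infty$ as $n\to\infty$.

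For the ``only if'' direction (necessity of (i)--(iii)) I would argue by contraposition: if any one of (i), (ii), (iii) fails, then $f(n)$ is prime for only finitely many $n$. If (i) fails, the leading coefficient is negative, so $f(n)\to-\infty$ and hence $f(n)\le 0$ for all large $n$, so $f$ produces only finitely many primes. If (ii) fails, write $f=gh$ with $g,h\in\mathbb{Z}[x]$ and neither a unit. When $g$ and $h$ are both non-constant, each takes the values $\pm1$ only finitely often, so $|g(n)|\ge 2$ and $|h(n)|\ge 2$ for all large $n$, whence $f(n)=g(n)h(n)$ is composite; when instead one factor, say $g$, is a non-unit constant $c$ with $|c|\ge 2$, then $c\mid f(n)$ for every $n$ while $|f(n)|>|c|$ for large $n$, so $f(n)$ is again composite eventually. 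If (iii) fails, there is a prime $q$ with $N_q(f)=q$, i.e.\ $q\mid f(n)$ for every integer $n$; since $|f(n)|>q$ for all large $n$, such an $f(n)$ is a multiple of $q$ strictly larger in absolute value than $q$, hence composite. In every case $f$ produces only finitely many primes, which establishes necessity.

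For the ``if'' direction I would like to show that (i)--(iii) force $f$ to produce infinitely many primes, but this is precisely the unresolved Bouniakowsky conjecture, so I do not expect to complete it; this is, unavoidably, the main obstacle. The one sub-case available unconditionally is $\deg f=1$: writing $f(x)=ax+b$ with $a>0$, conditions (ii) and (iii) are each equivalent to $\gcd(a,b)=1$, and then Dirichlet's theorem on primes in arithmetic progressions supplies infinitely many primes of the form $an+b$. In higher degree the only available route is heuristic: a Bateman--Horn / sieve analysis predicts $\#\{n\le x:\ f(n)\ \text{prime}\}\sim \frac{1}{\deg f}\,\bigl(\prod_{q}(1-\tfrac{N_q(f)}{q})(1-\tfrac1q)^{-1}\bigr)\,\frac{x}{\log x}$, with the Euler product positive exactly under (iii); but upgrading this prediction to a rigorous lower bound runs into the parity problem of sieve theory and is out of reach even for a single irreducible quadratic such as $x^2+1$. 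Thus the honest outcome of the proof attempt is that the necessity half can be proved as above, while the sufficiency half remains conjectural.
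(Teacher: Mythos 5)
The statement is a conjecture, and the paper offers no proof of it; immediately before stating it the authors merely assert that the necessity of (i)--(iii) is ``easy to show.'' Your write-up correctly supplies that elementary necessity argument in full (including the often-overlooked constant-factor case of reducibility), and rightly identifies the sufficiency direction as the open content of the conjecture, so your treatment is consistent with the paper and contains no gap in the provable half.
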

This conjecture is a special case of a far reaching conjecture of Schinzel \cite{Sh} (the so called Hypothesis H) on prime values of a finite collection of polynomials.
A well-known conjecture of Bateman and Horn provides a quantitative version of Schinzel's Hypothesis H. Here we state this conjecture in the case of a single polynomial. A polynomial $f(x)$ is called a \emph{prime producing polynomial} if it produces infinitely many primes.
From now on we assume that Conjecture \ref{Boun} holds (i.e. $f(x)\in \mathbb{Z}[x]$ is a prime producing polynomial if and only if conditions (i), (ii), and (iii) hold). Let
$$\pi_f(x)=\#\{0\leq n\leq x;~f(n)~\text{is prime}\}.$$
\begin{conjecture}[\bf Bateman-Horn]
Assume that $f(x)\in \mathbb{Z}[x]$ produces infinitely many primes.  As $x\rightarrow \infty$, $$\pi_f(x)\sim \frac{1}{{\rm deg}(f)}\prod_{q~{\rm prime}} \left(\frac{q-N_q(f)}{q-1} \right) \frac{x}{\log{x}}=C(f) \frac{x}{\log{x}}.$$
\end{conjecture}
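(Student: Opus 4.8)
Since the Bateman--Horn conjecture is open, I can only sketch the standard heuristic that produces the stated constant, note which cases are genuinely theorems, and point to the real obstruction.

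The plan is to model the values $f(n)$ for $0\le n\le x$ as ``random'' integers of their typical size and then correct for the biased way $f(n)$ distributes to small moduli. By the prime number theorem a random integer of size $m$ is prime with ``probability'' $1/\log m$; condition (i) makes the leading coefficient positive, so $f(n)\asymp n^{{\rm deg}(f)}$ and $\log f(n)\sim {\rm deg}(f)\log n$, whence the naive count is
$$\sum_{n\le x}\frac{1}{{\rm deg}(f)\log n}\sim \frac{1}{{\rm deg}(f)}\,\mathrm{Li}(x)\sim \frac{1}{{\rm deg}(f)}\cdot\frac{x}{\log x};$$
this is where the factor $1/{\rm deg}(f)$ comes from. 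The model ignores that $f(n)$ is not equidistributed modulo a prime $q$: a random integer is coprime to $q$ with probability $(q-1)/q$, while by the definition of $N_q(f)$ the value $f(n)$ is coprime to $q$ for a proportion $(q-N_q(f))/q$ of residues $n\imod q$. Treating distinct primes as independent --- exact for any finite set of primes by the Chinese Remainder Theorem, and extrapolated to all of them --- multiplies the naive count by
$$\prod_{q~{\rm prime}}\frac{(q-N_q(f))/q}{(q-1)/q}=\prod_{q~{\rm prime}}\frac{q-N_q(f)}{q-1}=C(f),$$
giving the prediction $\pi_f(x)\sim C(f)\,\frac{1}{{\rm deg}(f)}\cdot\frac{x}{\log x}$. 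Here conditions (ii) and (iii) ensure that no factor vanishes and that this Euler product is meaningful; the one point that actually needs a theorem, not a heuristic, is its convergence, which is conditional (not absolute) and follows from the fact that $N_q(f)=1$ on average over $q$ --- a consequence of the Chebotarev density theorem, equivalently Landau's prime ideal theorem, applied to the splitting field of $f$.

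For the question of rigor: when ${\rm deg}(f)=1$, say $f(x)=ax+b$ with $a>0$ and $\gcd(a,b)=1$, the statement is exactly the prime number theorem for the progression $b\imod{a}$, hence a theorem of Dirichlet--de~la~Vall\'ee~Poussin, with uniformity from Siegel--Walfisz. For ${\rm deg}(f)\ge 2$ no such asymptotic is known for even one irreducible polynomial --- not even $f(x)=x^2+1$ --- and I do not expect to obtain it. The hard part, and the reason this remains a conjecture, is the parity problem of sieve theory: the sequence $\{f(n):n\le x\}$ is sparse, so sieve methods give only the upper bound $\pi_f(x)\ll C(f)\frac{x}{\log x}$ of the correct order of magnitude (Selberg's sieve) and lower bounds for values of $f$ with a bounded number of prime factors (Brun's sieve and its descendants), but never the matching asymptotic for primes themselves.
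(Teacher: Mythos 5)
The statement you were asked about is the Bateman--Horn conjecture, and the paper does not prove it: it is stated as a conjecture, with only a citation to Bateman--Horn for the one rigorously provable ingredient, namely the convergence of the product defining $C(f)$. You correctly recognize this, and your heuristic --- the probabilistic model $1/\log f(n)\sim 1/({\rm deg}(f)\log n)$ for primality, corrected by the local factors $\frac{(q-N_q(f))/q}{(q-1)/q}$ --- is exactly the standard motivation for the conjectured constant, and your remarks on the linear case (Dirichlet--de la Vall\'ee Poussin) and on the convergence of the Euler product via the average $N_q(f)=1$ (Chebotarev/Landau) are accurate and match what the cited reference establishes. So there is nothing to fault in substance.

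One notational point to fix before this could be spliced against the paper: the paper defines $C(f)$ to \emph{include} the factor $1/{\rm deg}(f)$, i.e.\ $C(f)=\frac{1}{{\rm deg}(f)}\prod_{q}\frac{q-N_q(f)}{q-1}$, whereas you define $C(f)$ as the bare product and then write the prediction as $C(f)\cdot\frac{1}{{\rm deg}(f)}\cdot\frac{x}{\log x}$. The asymptotic you arrive at is the same, but your $C(f)$ differs from the paper's by the factor $1/{\rm deg}(f)$, and since the paper later calls $C(f)$ the prime producing constant and uses it in Section 5, you should align your notation with theirs.
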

The constant $C(f)$ is called the {\em prime producing constant} of $f(x)$.
It can be shown that the product defining $C(f)$ is convergent (see \cite[p. 364]{BH}). 

A congruence class modulo a positive integer $m$ is called \emph {$m$-allowable for $f(x)$} if $(f(r), m)$ $=1$ for any integer $r$ in that congruence class. 
Similarly we can define an $m$-non-allowable congruence class for $f(x)$. Thus $N_q(f)$ is the number of $q$-non-allowable classes for $f(x)$. Note that in each $m$-non-allowable class for $f(x)$ there are only finitely many $n$ for which $f(n)$ is prime, since any prime in such class is a prime divisor of $m$ and such primes can be taken as values of $f(x)$ only finitely many times.
Moreover, as a consequence of Bateman-Horn conjecture, it can be shown that the integers $n$ for which $f(n)$ is prime are asymptotically uniformly distributed over the $m$-allowable classes for $f(x)$ (see \cite[p. 112]{M2} for a proof). In other words if $a$ is in an $m$-allowable class for $f(x)$ and $A_m(f)$ is the total number of $m$-allowable classes for $f(x)$, then 
$$\lim_{x\rightarrow \infty} \frac{\#\{0\leq n\leq x;~f(n)~\text{is prime for}~n\equiv a \imod{m} \}}{\#\{0\leq n\leq x;~f(n)~\text{is prime}\}}=\frac{1}{A_m(f)}. $$
We postulate this as the following conjecture which plays an important role in our investigations in this paper.
\begin{conjecture}[\bf Uniform Distribution]
\label{uniform}
Assume that $f(x)\in \mathbb{Z}[x]$ produces infinitely many primes. Then for any positive integer $m$ the integers $n$ for which $f(n)$ is prime are asymptotically uniformly distributed over the $m$-allowable congruence classes for $f(x)$. 
\end{conjecture}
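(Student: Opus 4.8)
The plan is to derive the statement conditionally from the Bateman--Horn conjecture, applied not to $f$ itself but to a family of shifted polynomials. Fix a positive integer $m$ and an $m$-allowable congruence class for $f(x)$, with representative $a$, $0\le a<m$. Since for a prime $q\mid m$ and any $r\equiv a\imod{m}$ one has $f(r)\equiv f(a)\imod{q}$, the $m$-allowability of the class of $a$ is equivalent to the single condition $(f(a),m)=1$. Set $g_a(x)=f(mx+a)$. The first step is to verify that $g_a(x)$ satisfies conditions (i), (ii), (iii), so that (granting Conjecture \ref{Boun}) it produces infinitely many primes and the Bateman--Horn conjecture applies to it. Condition (i) is immediate, since the leading coefficient of $g_a$ is that of $f$ times $m^{\deg f}>0$; condition (ii) holds because $x\mapsto mx+a$ is an invertible affine change of variable, so $g_a$ is irreducible over $\mathbb Z$ precisely when $f$ is. For (iii): if $q\nmid m$ then $x\mapsto mx+a$ permutes $\mathbb Z/q\mathbb Z$, whence $N_q(g_a)=N_q(f)<q$; and if $q\mid m$ then $g_a(x)\equiv f(a)\not\equiv 0\imod{q}$ for all $x$, so $N_q(g_a)=0<q$ (this last point is exactly where $m$-allowability of $a$ enters).

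The same local count gives the prime producing constant of $g_a$,
$$C(g_a)=\frac{1}{\deg f}\,\prod_{q\mid m}\frac{q}{q-1}\,\prod_{q\nmid m}\frac{q-N_q(f)}{q-1},$$
which does not depend on the chosen allowable class; call this common value $C^{\ast}$ (a positive real number, by the same convergence argument that applies to $C(f)$). A Chinese Remainder Theorem count of allowable classes modulo prime powers gives $A_m(f)=m\prod_{q\mid m}\frac{q-N_q(f)}{q}$, hence $A_m(f)\,C^{\ast}=m\,C(f)$; this is a convenient consistency check against Bateman--Horn for $f$ directly, but is not needed below.

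Next I would split $\pi_f(x)$ according to residue classes modulo $m$. A non-allowable class $a\imod{m}$ contributes only $O(1)$: there is a prime $q\mid m$ with $q\mid f(n)$ for every $n\equiv a\imod{m}$, so $f(n)$ can be prime only if $f(n)=q$, which holds for at most $\deg f$ values of $n$, and summing over the finitely many non-allowable classes still gives $O(1)$. For an allowable class $a\imod{m}$, the integers $0\le n\le x$ with $n\equiv a\imod{m}$ and $f(n)$ prime correspond bijectively, via $n=mt+a$, to integers $0\le t\le (x-a)/m$ with $g_a(t)$ prime, so their number equals $\pi_{g_a}\!\big((x-a)/m\big)$, which by Bateman--Horn and $\log((x-a)/m)\sim\log x$ is asymptotic to $C^{\ast}\cdot\frac{x}{m\log x}$. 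Summing over all $A_m(f)$ allowable classes yields $\pi_f(x)\sim \frac{A_m(f)\,C^{\ast}}{m}\cdot\frac{x}{\log x}$, and therefore the ratio of the count in one fixed allowable class to $\pi_f(x)$ tends to $1/A_m(f)$, which is the assertion.

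The only step requiring any thought is the local analysis: seeing that $m$-allowability forces $N_q(g_a)=0$ for every prime $q\mid m$, and concluding that $C(g_a)$ is independent of $a$; after that the argument is bookkeeping with the Bateman--Horn asymptotics. The genuine obstacle is, of course, that the whole argument is \emph{conditional} — it rests on the (open) Bateman--Horn conjecture applied uniformly across the family $\{g_a(x)=f(mx+a)\}$ — which is why the statement is recorded here as Conjecture \ref{uniform} rather than proved outright.
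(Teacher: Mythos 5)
Your derivation is correct and follows the same route the paper indicates: the statement is recorded as a conjecture precisely because it is a consequence of the Bateman--Horn conjecture applied to the shifted polynomials $f(mx+a)$ over the allowable classes, which is exactly the argument you give (and the one in the cited reference \cite[p.~112]{M2}). The local computation showing $N_q(g_a)=0$ for $q\mid m$ and $N_q(g_a)=N_q(f)$ for $q\nmid m$, hence that $C(g_a)$ is independent of the allowable class $a$, is the key point, and you have it right.
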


The following is proposed by Moree \cite[Conjecture 3, p. 119]{M2}.
\begin{conjecture}[\bf Generalized Artin's Conjecture]
{Assume that $f(x)\in \mathbb{Z}[x]$ produces infinitely many primes.
For an integer $g$ let
$$\delta_g(f,x):=\frac{\#\{0\leq n\leq x;~f(n)=p~ \text{is an Artin prime for}~g\}}{\#\{0\leq n\leq x;~f(n)~\text{is prime}\}}.$$
Then
$$\delta_g(f)=\lim_{x\rightarrow \infty} \delta_g(f, x)$$
exists.}
\end{conjecture}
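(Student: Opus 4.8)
The plan is to adapt Hooley's conditional proof of the classical case, using the Bateman--Horn and Uniform Distribution conjectures to govern the distribution of the prime values of $f(x)$, and the generalized Riemann hypothesis for the Kummer extensions $K_d:=\mathbb{Q}(\zeta_d,g^{1/d})$, where $\zeta_d=e^{2\pi i/d}$, to supply uniform error terms. The starting point is Hooley's inclusion--exclusion identity: for a prime $p$, writing $h_p=[(\mathbb{Z}/p\mathbb{Z})^{\times}:\langle g\rangle]$, the integer $g$ is a primitive root modulo $p$ exactly when $h_p=1$, while for squarefree $d$ one has $d\mid h_p$ exactly when $p\equiv 1\imod{d}$ and $g$ is a $d$-th power residue modulo $p$ --- equivalently, when $p$ splits completely in $K_d$. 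Summing the identity $\sum_{d\mid h_p}\mu(d)=\mathbf 1[h_p=1]$ over $n$ gives
$$\#\{0\le n\le x:\ f(n)\text{ is an Artin prime for }g\}=\sum_{d\ge 1}\mu(d)\,\pi_f(x;d),$$
where $\pi_f(x;d):=\#\{0\le n\le x:\ f(n)\text{ is a prime that splits completely in }K_d\}$; the sum is finite because $p=f(n)\equiv 1\imod{d}$ forces $d\le f(x)$.

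The next step is to establish, for each fixed $d$, an asymptotic $\pi_f(x;d)\sim\rho_f(d)\,\pi_f(x)$ with an explicit density $\rho_f(d)$. Here the splitting condition separates into an abelian part and a residual part: the maximal abelian subextension of $K_d$ cuts out a congruence condition on $p=f(n)$, hence a union of congruence classes for $n$, whose frequency among $\{0\le n\le x:\ f(n)\text{ prime}\}$ is supplied by Conjecture \ref{uniform} (in terms of $A_m(f)$ for a modulus $m$ built from $d$ and the ramified primes), and the remaining condition is governed by Chebotarev-type equidistribution of the Frobenius at the primes $f(n)$ --- a refinement of the Bateman--Horn conjecture incorporating splitting conditions. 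The resulting $\rho_f(d)$ has denominator $[K_d:\mathbb{Q}]$, corrected by the behaviour of $f(x)$ modulo finitely many small primes and by the usual entanglement corrections when $g$ is negative, a perfect power, or has squarefree part congruent to $1\imod{4}$.

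The crux is to justify interchanging the limit in $x$ with the summation over $d$, so as to conclude $\delta_g(f)=\sum_{d\ge 1}\mu(d)\,\rho_f(d)$. This needs a uniform upper bound roughly of the shape $\pi_f(x;d)\ll \pi_f(x)/(d\varphi(d))$ over as long a range of $d$ as possible, together with control of the tail, which I would organize into three ranges as in Hooley: for $d$ up to a small power of $\log x$ one uses the asymptotics just described with error terms made effective under GRH; for the middle range one uses a Brun--Titchmarsh-type sieve bound for prime values of $f(x)$ lying in a given residue class modulo $d$; and the large range is disposed of by Hooley's device, namely that a prime $p=f(n)$ counted there divides $g^{(p-1)/d}-1$, an integer that is not too large, leaving few possibilities. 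Finally one checks that $\rho_f$ is multiplicative on squarefree arguments --- which, as in Hooley, reduces to $K_{d_1}\cap K_{d_2}=K_{\gcd(d_1,d_2)}$ up to controlled small fields --- so that $\sum_{d}\mu(d)\rho_f(d)=\prod_{q}\bigl(1-\rho_f(q)\bigr)$, a convergent product equal to a positive rational multiple of Artin's constant $A$ times a factor depending only on $f(x)$ modulo finitely many primes; this exhibits the limit and finishes the argument.

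I expect the main obstacle to be the crux step when $\deg f\ge 2$: even under GRH, the Brun--Titchmarsh and Chebotarev-type estimates needed for \emph{prime values of a polynomial} in arithmetic progressions are known only for very restricted moduli, so this route produces an unconditional theorem only for linear $f(x)$ (where it recovers the classical results on primitive roots in arithmetic progressions) and in general requires assuming a sufficiently uniform form of the Bateman--Horn conjecture alongside GRH --- which is why the assertion is only a conjecture.
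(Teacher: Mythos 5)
The statement you are addressing is stated in the paper as a conjecture (attributed to Moree) and is not proved there; your proposal, by your own closing admission, does not prove it either, so the honest verdict is that there is a genuine gap rather than a complete argument. Concretely, the gap sits exactly at your ``crux'' step. For a fixed squarefree $d$, the asymptotic $\pi_f(x;d)\sim\rho_f(d)\,\pi_f(x)$ already requires a Chebotarev-type refinement of the Bateman--Horn conjecture (equidistribution of Frobenius over the primes $f(n)$), which is itself unproven for $\deg f\ge 2$; and the interchange of the limit with the sum over $d$ requires a Brun--Titchmarsh bound for \emph{prime values of a polynomial} in arithmetic progressions, uniform in the modulus, which is not available for $\deg f\ge 2$ even under GRH --- indeed for $\deg f\ge 2$ one cannot even show $\pi_f(x)\to\infty$. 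Hooley's large-range device also degrades: for $p=f(n)$ with $n\le x$ one has $p\asymp x^{\deg f}$, so the divisibility $p\mid g^{(p-1)/d}-1$ eliminates far fewer moduli $d$, relative to the count of $n\le x$, than in the classical case. Your argument therefore reduces the conjecture to a bundle of stronger conjectures; it does not establish the existence of the limit.

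For comparison, the paper's own treatment is: (i) the conjecture is left as a conjecture; (ii) Section \ref{Heuristic} gives a heuristic that parallels your first two steps --- inclusion--exclusion over squarefree $k$, complete splitting in the Kummer fields $L_{g,k}$, and factorization of the densities via linear disjointness --- but it simply \emph{assumes} the densities $d_k(g,f)$ exist, and it restricts to odd $k$ by imposing that all primes produced by $f$ stay inert in $\mathbb{Q}(\sqrt{g})$, which is how the density expression \eqref{deltaf} sidesteps the entanglement corrections you mention; (iii) the only actual theorem in this direction is Proposition \ref{lineardensity}, the linear case under GRH, and even there the paper does not rerun Hooley's argument but invokes Moree's results on primes in arithmetic progressions having a prescribed primitive root. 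If you want to extract a theorem from your sketch, the linear case is the place to do it, and that is precisely what the paper does.
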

Combining the above conjecture with Bateman-Horn's conjecture we have 
$$\#\{0\leq n\leq x;~f(n)=p~ \text{is an Artin prime for}~g\}= \delta_g(f) C(f) \frac{x}{\log{x}}+o\left(\frac{x}{\log{x}} \right),$$
as $x\rightarrow \infty$. The case $f(x)=x$ corresponds to the classical Artin conjecture. It would be interesting if similar to the classical case we could  develop a conjectural value for the density $\delta_g(f)$. This appears to be difficult. 
However, 
it seems possible to propose a conjectural density in certain cases.

\begin{conjecture}[\bf Density Expression]
\label{GAC}
Assume that $f(x)\in \mathbb{Z}[x]$ produces infinitely many primes. Let $g$ be a square-free integer with the property that all the primes produced by $f(x)$ (except finitely many) stay inert in $\mathbb{Q}(\sqrt{g})$. Then 
$$\delta_g(f)=\lim_{x\rightarrow \infty} \delta_g(f, x)$$
exists and is independent of $g$. Moreover,
\begin{equation}
\label{deltaf}
\delta_g(f)=\delta(f) := \prod_{{\rm prime}~q > 2}\left(1-\frac{\#\{s \imod{q} | f(s) \equiv 1 \imod{q}\}}{q\#\{s \imod{q} | f(s) \not\equiv 0 \imod{q} \}}\right).
\end{equation}
\end{conjecture}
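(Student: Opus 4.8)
The plan is to give a heuristic derivation of \eqref{deltaf}, conditional on the conjectures of Bateman--Horn, Uniform Distribution, and Generalized Artin stated above, supplemented by a Hooley-type hypothesis (GRH for the Dedekind zeta functions of the Kummer fields $\mathbb{Q}(\zeta_k, g^{1/k})$) and by the expectation that, for a prime $p=f(n)$, the splitting behaviour of $p$ in these Kummer fields is statistically independent of the event that $f(n)$ is prime. First I would reduce the Artin condition to a residue-power criterion: $p$ is an Artin prime for $g$ if and only if, for every prime $\ell\mid p-1$, the integer $g$ is not an $\ell$-th power residue modulo $p$, equivalently $p$ does not split completely in $\mathbb{Q}(\zeta_\ell, g^{1/\ell})$. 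The hypothesis that all but finitely many primes produced by $f$ stay inert in $\mathbb{Q}(\sqrt g)$ says precisely that $g$ is a non-square modulo each such $p$, so the obstruction at $\ell=2$ never occurs; hence for the primes relevant to $\delta_g(f)$ the Artin condition is equivalent to ``$g$ is not an $\ell$-th power residue mod $p$ for every \emph{odd} prime $\ell\mid p-1$''. By inclusion--exclusion over odd square-free $k$, for such $p$,
\[
\mathbf 1[g \text{ is a primitive root mod } p]
= \sum_{\substack{k \text{ odd} \\ k \text{ square-free}}} \mu(k)\,
\mathbf 1\bigl[p \text{ splits completely in } \mathbb{Q}(\zeta_k, g^{1/k})\bigr].
\]

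Next I would evaluate, for each fixed odd square-free $k$, the density of those $n\le x$ with $f(n)=p$ prime and $p$ split completely in $\mathbb{Q}(\zeta_k, g^{1/k})$, relative to all $n\le x$ with $f(n)$ prime. Splitting completely factors into the cyclotomic condition $p\equiv 1\imod{k}$ and the Kummer condition that $g$ be a $k$-th power residue mod $p$. The first is a congruence condition on $n$: it forces $n$ into the union of classes mod $k$ on which $f\equiv 1$, each of which is $k$-allowable for $f$, so by Conjecture~\ref{uniform} its conditional density equals $\#\{s\imod{k} : f(s)\equiv 1\imod{k}\}/\#\{s\imod{k} : (f(s),k)=1\}$, which by the Chinese Remainder Theorem is $\prod_{\ell\mid k}\#\{s\imod{\ell} : f(s)\equiv 1\imod{\ell}\}/(\ell-N_\ell(f))$. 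Granting that the Kummer condition is independent of both ``$f(n)$ prime'' and the congruence $p\equiv 1\imod{k}$, Chebotarev's theorem applied to $\mathbb{Q}(\zeta_k, g^{1/k})$, which has degree $k\varphi(k)$ over $\mathbb{Q}$ since $g$ is square-free and $k$ is odd, gives that among primes $\equiv 1\imod{k}$ the proportion with $g$ a $k$-th power residue is $1/k$. Multiplying, the conditional density of the summand indexed by $k$ is $\prod_{\ell\mid k}\#\{s\imod{\ell} : f(s)\equiv 1\imod{\ell}\}/\bigl(\ell(\ell-N_\ell(f))\bigr)$.

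Summing against $\mu(k)$ and using that $\ell-N_\ell(f)=\#\{s\imod{\ell} : f(s)\not\equiv 0\imod{\ell}\}$, this multiplicative-in-$k$ sum factors as the Euler product \eqref{deltaf} over odd primes $\ell$; the resulting expression manifestly does not involve $g$, which accounts for the asserted independence of $\delta_g(f)$ from $g$. Absolute convergence and positivity of the product follow from $\#\{s\imod{\ell} : f(s)\equiv 1\imod{\ell}\}\le\deg f$ together with condition (iii), which forces $\ell-N_\ell(f)\ge 1$: hence each factor lies in $(0,1)$ and equals $1+O_f(\ell^{-2})$, so the limit $\delta_g(f)$ exists and is a positive constant.

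The hard part is making the two independence steps rigorous. The first is interchanging $\lim_{x\to\infty}$ with the infinite sum over $k$ --- exactly Hooley's difficulty in the classical case --- which would require splitting $k$ into small, medium, and large ranges, with GRH-controlled error terms in Chebotarev for small $k$, Brun--Titchmarsh/sieve bounds for medium $k$, and a bound on the number of prime factors of $p-1$ for large $k$, now carried out over the values $p=f(n)$ rather than over all primes $p\le x$. The second is justifying that the Kummer splitting condition on $p=f(n)$ is asymptotically independent of $f(n)$ being prime; this amounts to a Bateman--Horn statement with an added Chebotarev constraint and is not available even conditionally. For these reasons the statement is offered as a conjecture, with the above as its heuristic justification; in the linear, quadratic, and cubic cases treated later one can at least verify \eqref{deltaf} numerically against $\delta_g(f,x)$ for explicit $f$ and $g$.
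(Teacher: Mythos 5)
Your heuristic is essentially the same as the paper's own justification in Section~\ref{Heuristic}: both reduce the Artin condition to complete splitting in the Kummer fields $\mathbb{Q}(\zeta_q, g^{1/q})$, use the inertness hypothesis to discard the even part of the inclusion--exclusion, invoke linear disjointness to factor the sum over odd square-free $k$ into an Euler product, and evaluate each local density as the product of a cyclotomic factor (computed via Conjecture~\ref{uniform}) and a Kummer factor $1/q$ (via Chebotarev). Your added discussion of where a rigorous Hooley-type argument would break down goes slightly beyond what the paper records, but the derivation itself matches.
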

In Section \ref{Heuristic} we give a heuristic argument in support of the above density expression. Also in Proposition \ref{lineardensity}, under the assumption of the generalized Riemann hypothesis for Dedekind zeta function of certain  number fields, we prove that the above conjecture is true for linear polynomials. The infinite product $\delta(f)$ was first proposed by Moree \cite{M2} as a good approximation for $\delta_g(f)$. We have done some experiments in order to see how well $\delta(f)$ approximates $\delta_g(f)$.
Using a variety of quadratics $f(x)$ and integers $g$ with the property that all the primes produced by $f(x)$ (except finitely many) stay inert in $\mathbb{Q}(\sqrt{g})$, we numerically estimated values for $\delta_g(f)$ and $\delta(f)$.
More precisely, we used the first 500000 primes in the infinite product defining $\delta(f)$ to find a value for $\delta(f)$. We then found the actual value of $\delta_g(f,X)$ by counting how many of the primes produced in the sequence $f(0),f(1), ... ,f(X)$ are Artin primes for $g$. We did this for three different values of $X$ (i.e. $500000$, $1000000$, and $5000000$)  and recorded the difference between our approximated value of $\delta(f)$ and $\delta_g(f, X)$. 
A sample of our experimental data for four quadratic polynomials is given in Table \ref{AccTable}{


\begin{table}
\centering

\begin{tabular}{|l|l|l|l|l|l|}
\hline
Polynomial  $f(x)$& Fixed $g$ &$\approx \delta(f)$& $\delta(f)-$ & $\delta(f)-$ & $\delta(f)-$ \\
& & &$\delta_g(f, 500000)$ &$\delta_g(f,1000000)$ & $\delta_g(f,5000000)$\\
\hline
\footnotesize $56417x^2+174208554651372$ & \footnotesize $1877$ & \footnotesize $0.9987863$ & \footnotesize $0.0002893$ & \footnotesize $0.0002678$ & \footnotesize $0.0000665$ \\ \hline
\footnotesize $x^2+9828324151968468548$ & \footnotesize $14458$ & \footnotesize $0.9989678$ & \footnotesize $0.0002039$ & \footnotesize $0.0003953$ & \footnotesize $0.0002410$ \\ \hline
\footnotesize $x^2+2x+9828324124393614405$ & \footnotesize $8458$ & \footnotesize $0.9988635$ & \footnotesize $0.0000252$ & \footnotesize $0.0003463$ & \footnotesize $0.0001775$ \\ \hline
\footnotesize $x^2+2x+9828324573822479829$ & \footnotesize $3$ & \footnotesize $0.9989856$ & \footnotesize $0.0000661$ & \footnotesize $0.0006741$ & \footnotesize $0.0004032$ \\ \hline
\end{tabular}
\hskip 2cm
\caption{ Numerical results on $\delta(f)-\delta_g(f)$ }
\label{AccTable}
\end{table}

In contrast with the classical Artin constant, which has a relatively small value ($\approx 3/8$), the values of $\delta(f)$ for four quadratic polynomials recorded in Table \ref{AccTable} are very large ($\approx 1$). The existence of such polynomials was conjectured, in a related problem, first by Griffin and later was explored by Lehmer \cite{L} and Moree \cite{M2}. We now consider this problem.

Let $f(x)$ be a prime producing polynomial with integer coefficients and $g$ be an integer. Consider the sequence $(f(n))_{n=0}^\infty$.  Let $p_{i}(g, f)$ be the $i$-th prime in this sequence which is also relatively prime to $g$. Let
$$\ell_g(f)=\min\{i\in\mathbb{N};~p_i(g,f)~\mbox{is not an Artin prime for}~g\}-1.$$ 
If the above minimum does not exist we set $\ell_g(f)=\infty$.
We call $\ell_g(f)$ the  \emph{Artin prime production length of $f(x)$ with respect to $g$}.
A natural question to ask is whether it is possible to find polynomials $f(x)$ and integers $g$ with very large Artin prime production length.
The first known attempt for finding a polynomial $f(x)$ and an integer $g$ with large  $\ell_g(f)$ was carried out by Raymond Griffin. In 1957, he proposed that the decimal expansion of $1/p$ should have period length $p-1$ for all primes of the form 
$10n^2 + 7$. This is equivalent to saying that the polynomial $10x^2+7$ with $g=10$ has infinite Artin prime production length, although with modern computers we can quickly determine that this length is only 16. The problem of finding $f(x)$ and $g$ with $\ell_g(f)=\infty$ is known as Griffin's dream.
Moree has conjectured that Griffin's dream cannot be realized for prime producing quadratic polynomials.
Lehmer \cite{L} considered this problem and showed that for $g = 326$, primes produced by the polynomial $326x^2 + 3$ are expected to be Artin primes for $326$ with a probability of $0.99337...$ (This value is corrected to $0.99323\cdots$ in \cite{M2}).
It turns out that the first $206$ primes produced by $326x^2+3$ have $326$ as a primitive root. This is remarkable keeping in mind that by the classical Artin conjecture the likelihood that $206$ primes are Artin primes for $326$ is roughly
$$\left( \frac{3}{8}\right)^{206} \approx 0.1780086686\times 10^{-87}.$$
In 2007 Moree \cite{M2} generalized the method used by Lehmer in order to find many quadratic polynomials  $f(x)$ and integers $g$ with large $\ell_g(f)$.

Note that the problem of finding $f(x)$ and $g$ with large $\ell_g(f)$ is intimately related to finding $f(x)$ and $g$ with  
a large density $\delta_g(f)$. In fact the expected value of $\ell_g(f)$ can be approximated by the sum
\begin{equation}
\label{expected}
\sum_{j=1}^{\infty} j \delta_g(f)^j (1-\delta_g(f))= \frac{\delta_g(f)}{1-\delta_g(f)}.
\end{equation}
So a value of $\delta_g(f)$ close to $1$ will result in a large value for the expected Artin prime production length $\ell_g(f)$. 

We can use the density expression \eqref{deltaf} in order to find $f(x)$ and $g$ with large $\delta_g(f)$ as long as all primes produced by $f(x)$ (except finitely many) stay inert in $\mathbb{Q}(\sqrt{g})$. In other words we should have $\left(\frac{D}{p}\right)=-1$ for all (except finitely many) primes $p=f(n)$, where $D$ is the discriminant of $\mathbb{Q}(\sqrt{g})$. It is clear that this happens, under the assumption of Conjecture \ref{uniform},
if and only if $\tau_D^-(f)=1$, where 
$$\tau_D^{-}(f) = \frac{\#\{r \imod{D} | \left(\frac{D}{f(r)}\right) = -1\}}{\#\{r \imod{D} |(f(r),D) = 1\}}.$$ Here $\left(\frac{D}{.} \right)$ denotes the Kronecker symbol.
By the quadratic reciprocity and under the assumption of Conjecture \ref{uniform} one can find expressions for $\tau_D^{-}(f)$ in terms of quadratic character sums with polynomial arguments (see Theorem \ref{Theorem 1}). The computations of these sums for general polynomials are difficult, however for linear, quadratic, and some special cubics one can find explicit expressions for $\tau_D^-(f)$. We can then use these expressions to prove the following useful result.
\begin{proposition}
Assume that $f(x)=ax^n+b$ produces infinitely many primes and that the primes produced by $f(x)$ are uniformly distributed among allowable congruence classes. Let $D$ be a fundamental discriminant.

(i) If $n=1$ and $\tau_D^-(f)=1$, then $D\mid a$.

(ii) If $n=2$ and $\tau_D^-(f)=1$, then $D\mid 24a^2b$.

(iii) If $n=3$ and $\tau_D^-(f)=1$, then $D\mid 56a$.
\end{proposition}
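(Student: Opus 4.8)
The plan is to substitute $f(x)=ax^{n}+b$ into the character-sum expression for $\tau_D^-(f)$ supplied by Theorem \ref{Theorem 1}, or, equivalently, to localise the (purely combinatorial) hypothesis $\tau_D^-(f)=1$ at each prime ramifying in $\mathbb{Q}(\sqrt{g})$ and then apply the classical evaluation of the resulting one-variable character sums. Observe first that condition (iii) forces $\gcd(a,b)=1$, so the set of $m$-allowable residues for $f$ is nonempty for every modulus $m$; in particular the uniform-distribution hypothesis is not needed below, only the assertion $\tau_D^-(f)=1$.

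First I would localise. Factor the fundamental discriminant $D=\prod_i D_i$ into prime discriminants ($D_i$ equal to $-4$, $\pm 8$, or $p^{*}:=(-1)^{(p-1)/2}p$ for odd $p\mid D$); the $|D_i|$ are pairwise coprime with product $|D|$, and $\left(\frac{D}{\cdot}\right)=\prod_i\chi_{D_i}$, where $\chi_{D_i}$ is the real primitive character of conductor $|D_i|$. By the Chinese Remainder Theorem a residue $r\bmod D$ is $D$-allowable for $f$ iff every reduction $r\bmod |D_i|$ is $|D_i|$-allowable, and then $\left(\frac{D}{f(r)}\right)=\prod_i\chi_{D_i}(f(r))$; since $|D_i|$-allowable classes exist, fixing all coordinates but one and varying the last shows that $\tau_D^-(f)=1$ forces each map $r\mapsto\chi_{D_i}(f(r))$ to be constant on the $|D_i|$-allowable residues. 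It then suffices to show this local constancy forces $|D_i|\mid a$ when $n=1$, $|D_i|\mid 24a^{2}b$ when $n=2$, and $|D_i|\mid 56a$ when $n=3$; multiplying these pairwise-coprime divisibilities yields the three assertions.

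For an odd prime $p\mid D$ I would argue as follows. Here $\chi_{p^{*}}(m)=\left(\frac{m}{p}\right)$, so local constancy says $\left(\frac{f(r)}{p}\right)=\varepsilon$ for a fixed sign $\varepsilon$ whenever $p\nmid f(r)$, and summing over $r\bmod p$ gives
$$\sum_{r\bmod p}\left(\frac{f(r)}{p}\right)=\varepsilon\bigl(p-N_p(f)\bigr),\qquad N_p(f)\le\gcd(n,p-1)\le n.$$
Now I evaluate the left-hand side, assuming $p\nmid a$ (if $p\mid a$ the required divisibility is already in hand). For $n=1$ it is $0$, so $p-1\le 0$, impossible; hence $p\mid a$. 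For $n=2$ and $p\nmid b$ the sum equals $-\left(\frac{a}{p}\right)$ (complete the square and count $\mathbb{F}_p$-points on the conic $y^{2}-ax^{2}=b$), so $p-2\le 1$ and $p=3$; thus $p\mid ab$ or $p=3$, and either way $p\mid 24a^{2}b$. For $n=3$: if $p=3$ or $p\equiv 2\pmod 3$ then $x\mapsto x^{3}$ permutes $\mathbb{F}_p$, the sum reduces to the linear case and equals $0$, forcing $p\mid a$; if $p\equiv 1\pmod 3$ then $p\mid b$ is impossible (it would make $\left(\frac{f(r)}{p}\right)=\left(\frac{a}{p}\right)\left(\frac{r}{p}\right)$ non-constant), so $y^{2}=ax^{3}+b$ is an elliptic curve over $\mathbb{F}_p$ and the sum equals $-a_p$ with $|a_p|\le 2\sqrt{p}$ by Hasse; then $p-3\le 2\sqrt{p}$, i.e. $(\sqrt{p}-3)(\sqrt{p}+1)\le 0$, so $p\le 9$ and $p=7$, giving $p\mid 56a$ in every case.

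Finally I would dispose of the $2$-part. For $n=2$ and $n=3$ there is nothing to do: the $2$-part of a fundamental discriminant is $1$, $4$, or $8$, which already divides $24a^{2}b$ and $56a$. For $n=1$ a direct check at $|D_i|\in\{4,8\}$ works: since $\gcd(a,b)=1$, if $2\mid a$ then $b$ is odd, and then $ar+b$ takes both odd residues modulo $4$ (resp. $8$) unless $4\mid a$ (resp. $8\mid a$), so constancy of $\chi_{D_i}\circ f$ forces $|D_i|\mid a$. Assembling these local divisibilities finishes the argument. The one genuinely substantive point, I expect, is the cubic case with $p\equiv 1\pmod 3$: there the permutation trick is unavailable, one must invoke the Hasse bound (equivalently the Jacobi-sum evaluation of $\sum_{r}\left(\frac{ar^{3}+b}{p}\right)$ underlying Theorem \ref{Theorem 1}), and then check that $p=7$ is the unique prime surviving the resulting inequality --- which is exactly where the constant $56=8\cdot 7$ comes from.
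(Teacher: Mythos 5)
Your argument is correct, and it reaches the three divisibilities by a genuinely different route from the paper. The paper funnels everything through Moree's Theorem \ref{Theorem 1}: the hypothesis $\tau_D^-(f)=1$ is first converted into $a_{D_1}(f)=\pm1$ together with conditions on the $\alpha_j$, and then $a_q(f)$ is evaluated prime by prime --- via the vanishing of $\sum_m\left(\frac{am+b}{q}\right)$ for $q\nmid a$ in the linear case (Proposition \ref{linearcondition}), by citing Moree's result in the quadratic case (Proposition \ref{quadratic condition}), and via the explicit evaluation $1+\phi_{q,3}(\bar a^2\bar b)\in\{2A,\,-A\pm3B\}$ with $q=A^2+3B^2$ in the cubic case (Lemma \ref{a_pCubic}, Propositions \ref{AB} and \ref{cubic condition}). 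You instead derive the local constancy of each $\chi_{D_i}\circ f$ directly from the factorization of the Kronecker symbol into prime discriminants plus CRT; this makes Theorem \ref{Theorem 1} unnecessary here and correctly exposes that the uniform-distribution hypothesis plays no role in this purely combinatorial implication (the paper carries it only because Theorem \ref{Theorem 1} is stated with it). Your $2$-part analysis for $n=1$ is then the same mod-$4$/mod-$8$ case check the paper performs with the $\alpha_j$. The other substantive difference is the cubic case with $q\equiv1\pmod 3$: where the paper bounds $|1+\phi_{q,3}|$ using $|A|\le\sqrt q$ and $|B|\le\sqrt{q/3}$, you invoke the Hasse bound for $y^2=ax^3+b$, and your single inequality $p-N_p(f)\le\bigl|\sum_r\left(\frac{f(r)}{p}\right)\bigr|$ treats all three degrees uniformly and would extend to higher degrees via Weil; the paper's explicit Jacobsthal evaluation is still needed elsewhere, to compute $a_{D_1}(f)$ in the cubic search procedure, so neither approach supplants the other. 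Two small points of precision: for $|D_i|=8$ you should say that when $a\equiv4\pmod 8$ the values $b$ and $b+4$ are odd residues on which $\chi_{\pm8}$ disagree (which is true), rather than that $ar+b$ hits ``both odd residues'' mod $8$; and in the cubic non-residue subcase your inequality becomes $p\le2\sqrt p$, which eliminates every $p\equiv1\pmod3$, so only the cubic-residue subcase actually contributes $p=7$ --- harmless, but worth stating.
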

The above proposition plays a fundamental role in our search for integers $g$ and polynomials $f(x)$ with large $\ell_g(f)$.  Part (ii) of the above proposition for a general quadratic polynomial is proved in \cite[Proposition 3]{M2}. The proofs of parts (i) and (iii) are given in Sections \ref{The Linear Case} and \ref{The Cubic Case}. The proof in the cubic case involves a careful analysis of the character sum
\begin{align*}
\psi_{q,3}(E) &= \sum\limits_{u=1}^{q-1}\left(\frac{u^3 + E}{q}\right),
\end{align*}
and its associated Jacobsthal sum
\begin{align*}
\phi_{q,3}(E) &= \sum\limits_{u=1}^{q-1}\left(\frac{u}{q}\right)\left(\frac{u^3 + E}{q}\right),
\end{align*}
where $q\equiv 1 \imod{3}$ is prime and $u$ and $E$ are integers.
A section of this paper (Section \ref{The Cubic Case}) is devoted to the calculation of these character sums. 
Generalizations of these computations to the case of a full cubic $f(x)=ax^3+bx^2+cx+d$ seem to be difficult. This is the reason that we restricted our attention in this paper to special cubics of the form $f(x)=ax^3+b$.  

Another approach to the problems considered in this paper would be to study convenient ways for producing many Artin primes.
Our examples of prime producing polynomials with a high density of Artin primes for an integer $g$ provide a simple way of producing many  Artin primes. 
We can also do similar experiments by other functions, for example one can consider Artin primes associated to  linear recurrences.

The structure of the paper is as follows.
We will review Lehmer's results and Moree's generalization in Section \ref{A General Algorithm} and based on  the ideas in \cite{M2} we describe a general method for finding Artin prime producing polynomials of a given degree with large lengths. 
We next demonstrate this method for linear polynomials in Section \ref{The Linear Case} and in Table \ref{LinTable} provide the top five linear polynomials found in our search. 
In Section \ref{The Quadratic Case} we present our modification of the presented method in \cite{M2} for quadratic polynomials. Using our modified procedure, we find a quadratic polynomial $f(x)$ and an integer $g$ with $\ell_g(f) = 37951$. The top three quadratic polynomials $f(x)$ of negative discriminant and their corresponding integers $g$ found in our search are presented in Table \ref{QuadTable}.
In Section \ref{The Cubic Case} we prove specific results for the case of cubic polynomials and present the cubic
\begin{eqnarray*}
f(x)&=&
 {16735790906636782452200520x^3 \hspace{-1pt} + \hspace{-1pt} 41975422096126566714360524823960x^2}\\
&&+\hspace{-1pt} 35093173864667750962440687534348342360x\\
&&+\hspace{-1pt} 977977739033023039412995828230137416763737,
\end{eqnarray*}
which has $g=11045$ as a primitive root for the first  $10011$ primes produced by $f(x)$.
In Section \ref{Concluding Remarks} we finish the article with some remarks and questions for future research.


A database of results found in this research, which includes lists of linear, quadratic, and cubic polynomials with large Artin prime production lengths and experimental data regarding the value of $\delta(f)$ is available at www.cs.uleth.ca/${\sim}$akbary/APPP.



\begin{notation}
Throughout this article $p$ and $q$ denote prime numbers,
$\bar{a}$ denotes the modular inverse of $a$ with respect to a given modulus, $\mathbb{F}_p$ denotes the finite field of $p$ elements, and $\left(\frac{.}{p}\right)$ denotes the Legendre symbol.
\end{notation}

\section{A General Method}
\label{A General Algorithm}

\subsection{Lehmer's Example}

We start by reviewing Lehmer's result from \cite{L} which states that a very large proportion of primes in the form $326 n^2+3$ are Artin primes for $326$.  
The following elementary lemma provides a criterion for Artin primes.

\begin{lemma}
\label{Conditions for Primitive Roots}
Let $p\nmid 2g$. Then $p$ is not an Artin prime for $g$ if and only if there exists a prime $q$ such that $q \mid p-1$ and $g^{\frac{p-1}{q}} \equiv 1 \imod{p}$.
\end{lemma}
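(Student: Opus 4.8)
The plan is to recast everything in terms of the multiplicative order of $g$ modulo $p$. Since $p \nmid 2g$ forces $p \nmid g$, the residue of $g$ is a unit modulo $p$, so I would set $d = \ord_p(g)$; Fermat's little theorem gives $d \mid p-1$, and by definition $p$ fails to be an Artin prime for $g$ exactly when $d$ is a \emph{proper} divisor of $p-1$, i.e.\ $d < p-1$. So the lemma reduces to showing that $d < p-1$ if and only if $g^{(p-1)/q} \equiv 1 \imod p$ for some prime $q \mid p-1$. I would note here that the hypothesis $p \neq 2$ is precisely what guarantees that $p-1 > 1$ has a prime divisor, so that the right-hand condition is not vacuous.

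The ``if'' direction is immediate: if $q \mid p-1$ is prime with $g^{(p-1)/q} \equiv 1 \imod p$, then $d \mid (p-1)/q$, whence $d \le (p-1)/q < p-1$, and $g$ is not a primitive root modulo $p$.

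For the ``only if'' direction I would argue from the structure of $d$: assuming $d$ is a proper divisor of $p-1$, the integer $(p-1)/d$ exceeds $1$, so I can choose a prime $q$ dividing $(p-1)/d$. Then $q \mid p-1$, and writing $(p-1)/d = qk$ we obtain $(p-1)/q = dk$, so $d \mid (p-1)/q$ and hence $g^{(p-1)/q} = (g^d)^k \equiv 1 \imod p$, which exhibits the desired prime $q$.

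There is no serious obstacle here — the argument is just bookkeeping with Fermat's little theorem and the definition of multiplicative order. The one step worth isolating is the elementary fact used in the ``only if'' direction that a proper divisor of an integer $n$ must divide $n/q$ for some prime $q \mid n$; this is exactly what converts the statement ``$g$ is not a primitive root'' into the existence of a single witnessing prime $q$.
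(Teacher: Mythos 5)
Your proof is correct and complete: reducing the statement to whether $\ord_p(g)$ is a proper divisor of $p-1$ and extracting a witnessing prime $q$ from $(p-1)/\ord_p(g)$ is exactly the standard argument. The paper itself does not prove the lemma but simply cites \cite[Theorem 4.8]{Lev}, and your write-up supplies precisely the elementary order-theoretic proof that reference contains, so there is nothing to add.
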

\begin{proof}
See \cite[Theorem 4.8]{Lev}.
\end{proof}
We consider primes of the form $p = 326n^2 + 3$. Conjecture \ref{Boun} predicts that infinitely many such primes exist. By Lemma \ref{Conditions for Primitive Roots} if $p$ is not an Artin prime for $326$ then there exists a prime $q$ such that $q \mid p-1$ and $326^{\frac{p-1}{q}} \equiv 1 \imod{p}$. 
We claim that such $q$ cannot be equal to $2$, since otherwise $326^{\frac{p-1}{2}} \equiv 1 \imod{p}$ for $p=326n^2+3$ which implies that $\left(\frac{326}{p}\right) = 1$. However by the quadratic reciprocity we have
$$\left(\frac{326}{p}\right) = \left(\frac{326}{326n^2+3}\right) = \left(\frac{326n^2+3}{163}\right) = \left(\frac{3}{163}\right) = -\left(\frac{1}{3}\right) = -1.$$
Therefore $\left(\frac{326}{p}\right) \neq 1$ and so $q\neq 2$.

Now suppose that $q>2$ and $q \mid p-1= 2(163n^2 + 1)$, which can happen only if $\left(\frac{-163}{q}\right) = 1$. Note that the total number of $q$-allowable residue classes for $326x^2+3$
is $q-\left(1+\left(\frac{-978}{q}\right)\right)$. So under the condition $\left(\frac{-163}{q}\right) = 1$ there are exactly two $q$-allowable residue classes mod $q$ out of $q-\left(1+\left(\frac{-978}{q}\right)\right)$  that contains such primes.
Thus under the assumption of Conjecture \ref{uniform} we conclude that the probability that $q \mid p-1$ is $2/\left(q-1-\left(\frac{-978}{q}\right)\right)$. On the other hand, the probability that $326^{\frac{p-1}{q}} \equiv 1 \imod{p}$ (i.e. $326$ is a $q$-th power modulo $p$) is $\frac{(p-1)/q}{p-1}=\frac{1}{q}$, since the number of $q$-th power in $\mathbb{F}_p^\times$ is $(p-1)/q$ if $q\mid p-1$. 
Therefore a good approximation for the proportion of Artin primes of the form $326n^2 + 3$ for $326$ is
$$\prod\limits_{\left(\frac{-163}{q}\right) = 1}\left(1-\frac{2}{q\left(q-1-\left(\frac{-978}{q}\right)\right)}\right) = 0.99323\ldots.$$
Note that this infinite product coincides, for $f(x)=326x^2+3$, with the expression for $\delta(f)$ given in \eqref{deltaf}.
\subsection{Moree's Generalization}

In \cite{M2} Moree generalized Lehmer's method to an integer $g$ and an arbitrary prime producing polynomial $f(x)$. 
Here we describe his generalization.
Suppose that a polynomial $f(x)$ conjecturally produces infinitely primes and has large $\delta(f)$ as given in \eqref{deltaf}. In order to replicate examples similar to Lehmer's  we need to 
look for a quadratic field $\bQ(\sqrt{g})$ of discriminant $D$ with the property that all primes of the form $f(n)$ remain inert in $\bQ(\sqrt{g})$  (i.e. $\left( \frac{D}{f(n)}\right)=-1$). 
Moree \cite{M2} has devised a method for finding such quadratic fields.
Recall that for a fundamental discriminant $D$ and a polynomial $f(x)$ we defined 
$$\tau_D^{-}(f) = \frac{\#\{r \imod{D} | \left(\frac{D}{f(r)}\right) = -1\}}{\#\{r \imod{D} |(f(r),D) = 1\}}.$$ Note that $\tau_D^{-}(f)$  is a rational number. Moreover $\tau_D^{-}(f) = 1$ implies that all the primes $p=f(n)$ in $D$-allowable classes for $f(x)$ are inert in $\bQ(\sqrt{D})$.
The following result enables us to calculate $\tau_D^{-}(f)$.

\begin{theorem}[{\bf Moree}] 
\label{Theorem 1}
Let $D$ be a fundamental discriminant. Let $f(x)$ be a polynomial that generates infinitely many primes and assume that the primes of the form $f(n)$ are uniformly distributed over the $D$-allowable residue classes for $f(x)$. Let $D_1$ be the largest odd square-free divisor of $D$ and assume that $D_1 > 1$. For $j = 1,~3,~5$, and $7$, let
$$\alpha_j = \frac{\# \{ s \imod{8} | f(s) \equiv j \imod{8} \}}{4\# \{ s \imod{2} | f(s) \equiv 1 \imod{2}\}}. $$
Then we have
\begin{equation}
\label{tau}
2\tau_D^{-}(f) = \begin{cases}
1 - a_{D_1}(f) & \text{if D is odd,} \\
1 + (\alpha_3 + \alpha_7 -\alpha_1 -\alpha_5)a_{D_1}(f) & \text{if $D \equiv 4 \imod{8}$,} \\
1 + (\alpha_3 + \alpha_5 -\alpha_1 -\alpha_7)a_{D_1}(f) & \text{if $D \equiv 8 \imod{32}$,} \\
1 + (\alpha_5 + \alpha_7 -\alpha_1 -\alpha_3)a_{D_1}(f) & \text{if $D \equiv 24 \imod{32}$,}
                   \end{cases}
\end{equation}                   
where, for odd square-free $d$, $a_d(f)$ is the multiplicative function defined by
\begin{equation}
\label{a_d}
a_d(f) = \frac{\sum_{r\imod{d}} \left(\frac{f(r)}{d} \right)}{\# \{ r\imod{d} | (f(r),d) = 1 \} }.
\end{equation}
\end{theorem}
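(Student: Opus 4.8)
At bottom, \eqref{tau} is a combinatorial identity between ratios of residue counts, and the plan is to separate the ``$2$-part'' of the Kronecker symbol $\left(\frac{D}{\cdot}\right)$ --- recorded by the $\alpha_j$ --- from its ``odd part'' --- recorded by $a_{D_1}(f)$ --- using the Chinese Remainder Theorem. The first step is to factor the Kronecker symbol. Since $D$ is a fundamental discriminant with largest odd square-free divisor $D_1$, we have $|D| = 2^a D_1$ with $a \in \{0,2,3\}$, and by multiplicativity of the Kronecker symbol together with quadratic reciprocity for the Jacobi symbol there is a real character $\epsilon_D$ modulo $8$ with
$$\left(\frac{D}{n}\right) = \epsilon_D(n)\left(\frac{n}{D_1}\right) \qquad\text{whenever } (n, 2D) = 1,$$
where $\left(\frac{\cdot}{D_1}\right)$ is the Jacobi symbol modulo $D_1$. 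Writing $D/2^a = \pm D_1$ according to the sign $\prod_{p \mid D_1}(-1)^{(p-1)/2}$ and reducing $D$ modulo $32$, one checks that $\epsilon_D$ is trivial when $D$ is odd, that $\epsilon_D(n) = (-1)^{(n-1)/2}$ (the character $\chi_{-4}$, equal to $+1$ exactly on $n \equiv 1, 5 \imod{8}$) when $D \equiv 4 \imod{8}$, that $\epsilon_D = \chi_8$ ($+1$ exactly on $n \equiv 1, 7 \imod{8}$) when $D \equiv 8 \imod{32}$, and that $\epsilon_D = \chi_{-4}\chi_8$ ($+1$ exactly on $n \equiv 1, 3 \imod{8}$) when $D \equiv 24 \imod{32}$.

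The second step is the counting. Put $N = \#\{t \imod{D_1} \mid (f(t), D_1) = 1\}$, which is positive since condition (iii) gives $N = \prod_{q \mid D_1}(q - N_q(f)) \geq 1$. By the definition \eqref{a_d} of $a_{D_1}(f)$ we have $\sum_{t \imod{D_1}} \left(\frac{f(t)}{D_1}\right) = N\, a_{D_1}(f)$, and since $\left(\frac{f(t)}{D_1}\right) = \pm 1$ precisely on the $N$ allowable classes, adding and subtracting gives $\#\{t \imod{D_1} \mid \left(\frac{f(t)}{D_1}\right) = \mp 1\} = \tfrac{N}{2}(1 \mp a_{D_1}(f))$. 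If $D$ is odd this already finishes the argument, since then $\left(\frac{D}{f(r)}\right) = \left(\frac{f(r)}{D_1}\right)$ and so $\tau_D^-(f) = \tfrac{N}{2}(1 - a_{D_1}(f))/N = \tfrac12(1 - a_{D_1}(f))$. If $2 \mid D$, let $r$ run over $\bZ/8D_1\bZ$ and write $r \leftrightarrow (s \imod{8}, t \imod{D_1})$ by the CRT; then $(f(r), D) = 1$ iff $f(s)$ is odd and $(f(t), D_1) = 1$, so the denominator of $\tau_D^-(f)$ equals $4c_2 N$ with $c_2 = \#\{s \imod{2} \mid f(s) \text{ odd}\}$ (note $\sum_j \alpha_j = 1$), while $\left(\frac{D}{f(r)}\right) = \epsilon_D(f(s))\left(\frac{f(t)}{D_1}\right) = -1$ forces the two factors to have opposite signs, so the numerator equals
$$P\cdot\tfrac{N}{2}\big(1 - a_{D_1}(f)\big) + Q\cdot\tfrac{N}{2}\big(1 + a_{D_1}(f)\big),\qquad P + Q = 4c_2,$$
where $P = \#\{s \imod{8} \mid \epsilon_D(f(s)) = 1\}$ and $Q = \#\{s \imod{8} \mid \epsilon_D(f(s)) = -1\}$. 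Dividing, $2\tau_D^-(f) = 1 + \tfrac{Q - P}{4c_2}\, a_{D_1}(f)$; since $\tfrac{1}{4c_2}\#\{s \imod{8} \mid f(s) \equiv j \imod{8}\} = \alpha_j$, the factor $\tfrac{Q-P}{4c_2}$ is exactly $\sum_{\epsilon_D(j) = -1}\alpha_j - \sum_{\epsilon_D(j) = 1}\alpha_j$, and plugging in the three $\epsilon_D$'s from the first step ($\{3,7\}$ against $\{1,5\}$, then $\{3,5\}$ against $\{1,7\}$, then $\{5,7\}$ against $\{1,3\}$) produces the four cases of \eqref{tau}.

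I expect the first step to be the real obstacle: correctly matching each residue class of $D$ modulo $32$ with the right $2$-part character $\epsilon_D$ demands careful sign-bookkeeping in quadratic reciprocity, in particular distinguishing $D_1 \equiv 1$ from $D_1 \equiv 3 \imod{4}$ and the possibility that $D/2^a = -D_1$. The rest is elementary: the CRT factorization and the add/subtract trick are routine. Note that the uniform-distribution hypothesis is not actually used in establishing the identity \eqref{tau}; it is what guarantees that the residue ratio $\tau_D^-(f)$ coincides with the true density of primes $f(n)$ that are inert in $\bQ(\sqrt{D})$, which is the application we have in mind.
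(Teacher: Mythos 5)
Your argument is correct: the factorization of the Kronecker symbol $\left(\frac{D}{\cdot}\right)$ into a character mod $8$ times the Jacobi symbol $\left(\frac{\cdot}{D_1}\right)$, the CRT split of the residues mod $8D_1$, and the add/subtract identity $\#\{t \imod{D_1} \mid \left(\frac{f(t)}{D_1}\right) = \mp 1\} = \tfrac{N}{2}(1 \mp a_{D_1}(f))$ do yield all four cases of \eqref{tau}, and your identification of $\epsilon_D$ with $\chi_{-4}$, $\chi_8$, $\chi_{-4}\chi_8$ in the three even cases checks out for both signs of $D$. The paper itself offers no proof of this theorem --- it simply cites Moree's original article --- and your argument is essentially the proof given there, so there is nothing to contrast; the only cosmetic point is that in the even cases you count residues mod $8D_1$ rather than mod $|D|$, which is harmless since numerator and denominator of $\tau_D^-(f)$ scale by the same factor.
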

\begin{proof}
See \cite[Theorem 1]{M2}.
\end{proof}
Using this theorem we can narrow down the search for a fundamental discriminant $D$ with $\tau_D^{-}(f)=1$.
\subsection{Heuristic on Density Expression $\delta(f)$}
\label{Heuristic}
In analogy with Artin's conjecture here we describe a heuristic argument that will lead to the density expression  \eqref{deltaf}.
The elementary conditions for Artin primes given in Lemma \ref{Conditions for Primitive Roots} have the following interpretation in terms of the splitting of primes in certain algebraic number fields. Let $\zeta_q$ denote a primitive $q$-th root of unity. Consider the Kummerian field  $L_{g,q}=\mathbb{Q}(\zeta_q, g^{1/q}$). Then $g$ is a primitive root for a prime $p\nmid 2g$ if and only if there is no prime $q$ for which $p$ splits completely in $L_{g, q}$.

Let $\mathcal{P}(f)$ be the set of primes produced by $f(x)$, $k=q_1 \cdots q_s$ be a square-free positive integer, and $L_{g, k}=L_{g, q_1}L_{g, q_2}\cdots L_{g, q_s}$ be the compositum of the fields $L_{g, q_i}$ ($1\leq i \leq s$). Let $d_k(g,f)$ be the density of primes in $\mathcal{P}(f)$ that split completely in $L_{g, k}$. If $f(x)=id(x)=x$, then by the Chebotarev density theorem we know that the density $d_k(g,id)$ exists. Let us assume that $d_k(g,f)$ exists in general. So using the splitting criteria for primitive roots and by employing an inclusion-exclusion argument we arrive at $$\delta_g(f)=\sum_{k=1}^{\infty} \mu(k) d_k(g, f),$$ where $\mu(.)$ is the M\"{o}bius function.
It can be shown that if $k=q_1\cdots  q_s$ is odd and $g$ is square-free then the fields $L_{g, q_1}, \cdots , L_{g, q_s}$ are linearly disjoint over $\mathbb{Q}$ and so $d_k(g, f)=d_{q_1}(g, f)\cdots d_{q_s}(g, f)$. In other words
$$ \sum_{\substack{{k=1}\\{2\nmid k}}}^{\infty} \mu(k) d_k(g, f)=\prod_{q>2} (1-d_q(g, f)).$$
So if we can choose a square-free integer $g$ such that all the primes produced by $f(x)$ (except finitely many) stay inert in   $L_2=\mathbb{Q}(\sqrt{g})$ (in elementary terms this means that the polynomial $x^2-g$ remains irreducible over $\mathbb{F}_p$ for all primes $p=f(n)$) then 
\begin{equation}
\label{deltagf}
\delta_g(f)=\sum_{\substack{{k=1}\\{2\nmid k}}}^{\infty} \mu(k) d_k(g, f)= \prod_{q>2} (1-d_q(g, f)).
\end{equation}

We continue by finding a conjectural explicit expression for $d_q(g, f)$.
Let $d_q^1(f)$ be the density of primes $p\in \mathcal{P}(f)$ that split completely in $\mathbb{Q}(\zeta_q)/\mathbb{Q}$, and let $d_q^2(g)$ be the density of prime ideals of $\mathbb{Q}(\zeta_q)$ that split completely in $\mathbb{Q}(\zeta_q, g^{1/q})/\mathbb{Q}(\zeta_q)$. Under the assumption that $d_q(g, f)$ and $d_q^1(f)$ exist, it would be plausible to assume that  
\begin{equation*}
\label{density}
d_q(g,f)=d_q^1(f) d_q^2(g).
\end{equation*}
From the Chebotarev density theorem we know that $d_q^2(g)=[\mathbb{Q}(\zeta_q, g^{1/q}):\mathbb{Q}(\zeta_q)]=1/q$.
It is known that an odd prime $p$ splits completely in the cyclotomic field $\mathbb{Q}(\zeta_m)$ if and only if $p\equiv 1 \imod{m}$. Also $2$ splits completely in $\mathbb{Q}(\zeta_m)$ only if $m=1$ or $2$. So $d_q^1(f)$ is the density of primes of the form $f(n)$ such that $f(n)\equiv 1 \imod{q}$.  Under the assumption of Conjecture \ref{uniform} we can conclude that
\begin{eqnarray*}
d_q^1(f)&=& \lim_{x\rightarrow \infty} \frac{\#\{0\leq n\leq x;~f(n)~\text{is prime and}~f(n)\equiv 1 \imod{q} \}}{\#\{0\leq n\leq x;~f(n)~\text{is prime}\}}\\
&=&
\frac{\#\{s \imod{q} | f(s) \equiv 1 \imod{q}\}}{\#\{s \imod{q} | f(s) \not\equiv 0 \imod{q} \}}.
\end{eqnarray*}
So
$$d_q(g,f)=d_q^1(f) d_q^2(g)=\frac{\#\{s \imod{q} | f(s) \equiv 1 \imod{q}\}}{q\#\{s \imod{q} | f(s) \not\equiv 0 \imod{q} \}}.$$ Applying the above expression for $d_q(g, f)$ in \eqref{deltagf} results in \eqref{deltaf}. 

\subsection{A General Method for finding large  $\ell_g(f)$}
\label{The General Algorithm}
We can now present a general method for finding an integer $g$ and a prime producing polynomial $f(x)$ with large $\ell_g(f)$. The density expression \eqref{deltaf} and Theorem \ref{Theorem 1} are the main tools in our search for Artin prime producing polynomials of large length.
\medskip\par
\noindent{\bf General Procedure}
\begin{enumerate}
 \item Select a prime producing polynomial $f(x)\in \mathbb{Z}[x]$ such that 
 $$\delta(f)=\prod_{q > 2}\left(1-\frac{\#\{s \imod{q} | f(s) \equiv 1 \imod{q}\}}{q\#\{s \imod{q} | f(s) \not\equiv 0 \imod{q} \}}\right)$$ is very close to $1$.
 \item Use Theorem \ref{Theorem 1} to find a fundamental discriminant $D$ such that $\tau_D^{-}(f) = 1$ and then select an integer $g$ such that $D$ is the discriminant of $\bQ(\sqrt{g})$.
 \item Determine the Artin prime production length of the polynomial $f(x)$ with respect to $g$.
\end{enumerate}

We can also use two variations once we have found a polynomial $f(x)$ and an integer $g$. Firstly we can consider $f_1(x) = f(x+d)$ which is simply a shift applied to $f(x)$ and repeat step (3) for $f_1(x)$ and $g$. Secondly we can consider $g_1=k^2g$ and  vary over $k\in \mathbb{N}$ and repeat step (3) for $f(x)$ and $g_1$.

\subsection{Analysis of the General Procedure}
First of all, we note that for a given prime producing polynomial $f(x)$ it is not always possible to find a fundamental discriminant $D$ with $\tau_D^-(f)=1$. For example by employing Theorem 2.2 and Proposition 3.1 we can show that for $f(x)=ax+1$ where $a$ is a product of distinct primes in the form  $q\equiv 1$ (mod $4$), there is no fundamental discriminant $D$ with  $\tau_D^-(f)=1$. Another such example is $f(x)=x^2+x+41$ (see [12, Remark 2, p. 119]). So from a theoretical point of view the success of the above procedure depends on step (2).  

Upon finding a fundamental discriminant $D$ with the property $\tau_D^-(f)=1$, steps (1) and (2) produce a polynomial $f(x)$ and an integer $g$ with $\delta_g(f)\approx \delta(f)$ very close to $1$  (note that for square-free $g$ we expect $\delta_g(f)=\delta(f)$). Since the expected value of $\ell_g(f)$ is $\delta_g(f)/(1-\delta_g(f))$ (see (1.2))
by choosing $\delta(f)$ close to $1$ we expect that $\ell_g(f)$ will be large. 

We emphasize that a successful implementation of the above procedure will also require a moderate size for the leading coefficient (and more generally for the coefficients) of the polynomial $f(x)$ given in step (1).  It is easy to find polynomials $f(x)$ of degree $n$ that conjecturally produce infinitely many primes with corresponding $\delta(f)$ arbitrarily close to $1$. For example one can consider a polynomial $f_y(x)=a x^n +(a+2)$ with $a=q_1q_2\cdots q_m$, where $q_1, q_2, \cdots, q_m$ are all the odd primes not exceeding $y$. From the definition of $\delta(f)$ in step (1) it is evident that we can make  $\delta({f_y})$ arbitrarily close to $1$ as long as we choose $y$ large enough. However as $y\rightarrow \infty$ the leading coefficient of $f_y(x)$ grows significantly and therefore, even if we can find a suitable $D$ in step (2), the very large size of the primes produced by $f_y(x)$ will make step (3) of the procedure computationally infeasible.  

In conclusion, following our general method, the challenge in the search for pairs $(f, g)$ with large $\ell_g(f)$ is twofold. On one hand we should be able to generate prime producing polynomials $f(x)$ with large $\delta(f)$ such that their coefficients are not significantly large, on the other hand we need to devise ways to efficiently decide on the existence of the fundamental discriminants $D$ with the property $\tau_D^{-}(f)=1$ and also be able to generate such $D$'s.

In the next three sections we surmount some of these difficulties for linear, quadratic, and some cubic polynomials, by calculating the exact expressions for $\delta(f)$ (see (3.1), (4.1), (5.1)) and computing some concrete character sums in these cases. Similar calculations for polynomials of higher degrees appear to be difficult. More specifically, 
Propositions 3.1, 4.1, and 5.6 show that for linear, quadratic, and certain cubic polynomials $f(x)$ there are only finitely many potential options for a fundamental discriminant $D$ with  $\tau_D^-(f)=1$. Using these criteria one can easily generate many examples of pairs $f$ and $D$ with 
$\tau_D^-(f)=1$. Consequently, following our general method, we provide more concrete procedures for linear, quadratic, and cubic polynomials in order to produce many pairs $(f, g)$ with large $\ell_g(f)$ and report some of the examples we obtained. Our most impressive findings are for quadratic polynomials. This is partly due to the fact that the expression (4.1) for $\delta(f)$ together with the known examples of the quadratic fields with the property that a long string of consecutive primes remain inert in them, allow us to find prime producing quadratic polynomials $f(x)$ with relatively small coefficients and $\delta(f)$ very close to $1$. In contrast, in the linear case maximizing the value of (3.1) forces us to consider polynomials $f_{y,b} (x)=a x+b$, with $a=q_1q_2\cdots q_m$, where $q_1, q_2, \cdots, q_m$ are all the odd primes not exceeding $y$. Because of the large size of $a$ (as $y\rightarrow\infty$)  in the linear examples, our findings  in the linear case are modest compared to the quadratic case (our top linear example has length $6355$ while our top quadratic example has length $37951$). In the cubic case the expression (5.1) allows us to consider cubic polynomials with smaller leading coefficients (we can assume that the prime factors of the leading coefficients are not congruent to $1$ mod $3$) and therefore we can find examples of cubics with the Artin prime production length almost $1.5$ times larger than the length of our findings in the linear case (our top cubic example has length 10011).
       
\section{The Linear Case}
\label{The Linear Case}

We demonstrate our general procedure by applying it to linear polynomials.
Let $f(x) = ax + b$, where $(a, b)=1$.  By solving the corresponding congruences in \eqref{deltaf}, we find
\begin{equation}
\label{lineardelta}
\delta(f) = \prod\limits_{\stackrel{q > 2}{q \mid (a,b-1)}}\left( 1 - \frac{1}{q} \right) \prod\limits_{\stackrel{q > 2}{q \nmid a}} 
\left( 1 - \frac{1}{q(q-1)} \right).
\end{equation}
Let $q>2$. We can easily establish the following character sum identity.
$$\sum\limits_{m = 0}^{q-1} \left( \frac{am+b}{q} \right) = \begin{cases}
 q\left(\frac{b}{q}\right) & \text{if $q \mid a$}, \\
 0 & \text{if $q\nmid a$}.
                                                            \end{cases}$$
Using this sum we evaluate  \eqref{a_d} for odd primes $q$ and for $f(x)=ax+b$ to deduce that
$$a_q(f) = \begin{cases}
            \left(\frac{b}{q}\right) & \text{if $q \mid a$,} \\
            0 & \text{if $q \nmid a$.}
           \end{cases}
$$
Note that $a_d(f)$ is multiplicative on odd square free integers $d$, so if we let $D$ be a fundamental discriminant and $D_1>1$ be the largest odd square-free divisor of $D$, we get

$$a_{D_1}(f) = \begin{cases}
                \left(\frac{b}{D_1}\right) & \text{if $D_1 \mid a$,} \\
                0 & \text{if $D_1 \nmid a$.}
               \end{cases}
$$
The following simple criterion reduces the search for a fundamental discriminant $D$ with $\tau_D^{-}(f)=1$ to a finite number of steps.
\begin{proposition}
\label{linearcondition}
If $\tau_D^{-}(f)=1$, then $D\mid a$.
\end{proposition}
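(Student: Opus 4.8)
The plan is to use Theorem \ref{Theorem 1} together with the explicit computation of $a_{D_1}(f)$ for $f(x) = ax+b$ that was just carried out. Suppose $\tau_D^-(f) = 1$, and write $D_1$ for the largest odd square-free divisor of $D$. The first case to dispose of is $D_1 = 1$, i.e. $D = \pm 4$ or $D = \pm 8$; in Theorem \ref{Theorem 1} the formula for $2\tau_D^-(f)$ is stated under the hypothesis $D_1 > 1$, so I would first note that for linear $f$ with $(a,b) = 1$ one can check directly (or cite the convention) that $\tau_D^-(f) \neq 1$ when $D_1 = 1$ — the four-term $\alpha_j$ expression with $a_{D_1} = a_1 = 1$ degenerates and the resulting value of $\tau_D^-(f)$ cannot equal $1$ because $\sum_j \alpha_j = 1$ forces the bracketed combination to have absolute value strictly less than $1$. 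So we may assume $D_1 > 1$.

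Now apply \eqref{tau}. In every one of the four cases, $2\tau_D^-(f) = 1$ is equivalent to an equation of the shape $c \cdot a_{D_1}(f) = 1$, where $c = -1$ when $D$ is odd and $c = \alpha_3 + \alpha_7 - \alpha_1 - \alpha_5$ (or the analogous sign pattern) when $D$ is even. Since $a_{D_1}(f)$ is either $0$ (when $D_1 \nmid a$) or $\left(\frac{b}{D_1}\right) = \pm 1$ (when $D_1 \mid a$), and since the even-case coefficient $c$ satisfies $|c| \le \alpha_1 + \alpha_3 + \alpha_5 + \alpha_7 = 1$, the equation $c\, a_{D_1}(f) = 1$ can only hold if $a_{D_1}(f) \neq 0$, which forces $D_1 \mid a$. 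This gives the odd part of the divisibility claim; in the odd case we already have $D = D_1 \mid a$ and we are done.

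It remains to handle the power of $2$ dividing $D$ in the even cases $D \equiv 4 \imod 8$, $D \equiv 8 \imod{32}$, $D \equiv 24 \imod{32}$. Here I would compute the relevant $\alpha_j$ directly for $f(x) = ax + b$ with $a$ necessarily odd (since $D_1 \mid a$ and, in the even case, $4 \mid D$ forces... well, one must track parity carefully): if $a$ is odd then $s \mapsto as + b \imod 8$ is a bijection on $\bZ/8\bZ$, so exactly one residue $s \imod 8$ has $f(s) \equiv j \imod 8$ for each odd $j$, and $\#\{s \imod 2 : f(s) \equiv 1 \imod 2\} = 1$ as well, giving $\alpha_1 = \alpha_3 = \alpha_5 = \alpha_7 = 1/4$. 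Then each bracketed combination $\alpha_3 + \alpha_7 - \alpha_1 - \alpha_5$ etc. equals $0$, so \eqref{tau} gives $2\tau_D^-(f) = 1$, i.e. $\tau_D^-(f) = 1/2 \neq 1$ — contradiction. Hence the even cases cannot occur at all once $a$ is odd, and combined with $D_1 \mid a$ we actually get the sharper conclusion that $D$ must be odd and $D \mid a$. (If instead $a$ is even, then $2 \mid a$ and $D_1 \mid a$ with $D_1$ odd; one still needs $\tau_D^- = 1$, and the same $\alpha_j$ computation — now $as+b$ is not a bijection mod $8$ but takes each value in a fixed residue class mod $2^{v_2(a)}$ — will again pin down the $\alpha_j$ and show the even cases give $\tau_D^- = 1/2$.)

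The main obstacle I anticipate is the bookkeeping around the $2$-adic part: carefully justifying that the degenerate case $D_1 = 1$ cannot yield $\tau_D^- = 1$, and computing the $\alpha_j$ for $f(x) = ax+b$ in all sub-cases according to $v_2(a)$, so as to rule out $D \equiv 4, 8, 24$ cleanly. The odd-part argument (extracting $D_1 \mid a$ from $c\, a_{D_1}(f) = 1$ using $|c| \le 1$) is straightforward; the subtlety is entirely in showing the even components of $D$ are incompatible with $\tau_D^-(f) = 1$ for a linear polynomial, which is what upgrades ``$D_1 \mid a$'' to the full statement ``$D \mid a$''.
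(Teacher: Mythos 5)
Your derivation of the odd part ($D_1\mid a$ from the fact that $2\tau_D^-(f)=2$ forces $a_{D_1}(f)\neq 0$) is exactly the paper's first step and is fine. The genuine gap is in your treatment of even $D$. You compute $\alpha_1=\alpha_3=\alpha_5=\alpha_7=1/4$ under the assumption that $a$ is odd, correctly conclude $\tau_D^-(f)=1/2$ in that case, and then assert in a parenthesis that the same computation ``will again pin down the $\alpha_j$ and show the even cases give $\tau_D^-=1/2$'' when $a$ is even. That assertion is false, and with it your conclusion that $D$ must be odd. For example $f(x)=12x+7$ has $f(r)\equiv 7\imod{12}$ and $\left(\frac{12}{7}\right)=-1$, so $\tau_{12}^-(f)=1$ with the even fundamental discriminant $D=12$; here $\alpha_1=\alpha_5=0$ and $\alpha_3+\alpha_7=1$, not all $\alpha_j=1/4$. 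The even cases are not impossible — they are precisely the cases where the extra $2$-power divisibility of $a$ must be \emph{extracted}, not contradicted.

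The paper's argument runs in the opposite direction: since $a_{D_1}(f)=\pm 1$ and the bracketed combination of $\alpha_j$'s has absolute value at most $1$ (the $\alpha_j$ are nonnegative and sum to $1$), the equation $2\tau_D^-(f)=2$ forces that combination to equal $\pm 1$, hence forces the two $\alpha_j$'s appearing with the unfavourable sign to vanish. The vanishing of, say, $\alpha_1$ means $an+b\equiv 1\imod 8$ has no solution, which successively forces $2\mid a$, then $4\mid a$ (and, in the $D\equiv 8,24\imod{32}$ cases, $8\mid a$); combined with $D_1\mid a$ this yields $D=4D_1\mid a$ or $D=8D_1\mid a$. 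You need to replace your bijection computation with this vanishing argument. (Your side remark about $D_1=1$ is also not right as stated — $D=-4$ with $4\mid a$ and $b\equiv 3\imod 4$ gives $\tau_{-4}^-(f)=1$ — though the paper itself tacitly restricts to $D_1>1$, so this is a secondary issue.)
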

\begin{proof}
From Theorem \ref{Theorem 1} and the above formula for $a_{D_1}(f)$ we deduce that if $\tau_D^{-}(f)=1$, then $a_{D_1}(f)$ cannot be equal to $0$ for such $D_1$, and so $D_1 \mid a$. Now if $D$ is odd we are done. Otherwise either $D=4D_1$ and $D_1=4k+3$ or $D=8D_1$ and $D_1=2k+1$.

Let $D=4D_1$ and $D_1=4k+3$. Then from \eqref{tau} and the fact that $a_{D_1}(f)=\pm 1$, we conclude that if $\tau_D^{-}(f)=1$, then either $\alpha_1=\alpha_5=0$ or $\alpha_3=\alpha_7=0$. We assume that $\alpha_1=0$, proofs for other cases are similar. Since $\alpha_1=0$ then $an+b\equiv 1 ~({\rm mod}~{8})$ does not have any solutions and so $2\mid a$. Now since $a$ is even and $(a, b)=1$ we deduce that $b$ is odd and therefore $1-b$ is even. Since $an+b\equiv ~1 ~({\rm mod}~{8})$ does not have any solutions
we have that $4\mid a$ which together with $D_1\mid a$ imply $D=4D_1\mid a$.

Next suppose that $D=8D_1$ and $D_1=2k+1$. Now if $\tau_D^{-}(f)=1$, from \eqref{tau} we deduce that one of $\alpha_1=\alpha_7=0$, or $\alpha_3=\alpha_5=0$,  or $\alpha_5=\alpha_7=0$, or $\alpha_1=\alpha_3=0$ hold. We assume that $\alpha_1=\alpha_7=0$, proofs for other cases are similar. Since $\alpha_1=0$ and $\alpha_7=0$ then a similar reasoning as above implies that $4\mid a$. Now suppose that $b=4k+1$, then since  $an+b\equiv 1 ~({\rm mod}~{8})$ does not have any solutions we conclude that $8\mid a$. Similarly if $b=4k+3$, then since  $an+b\equiv 7 ~({\rm mod}~{8})$ does not have any solutions we again conclude that $8\mid a$.
So $8\mid a$ which together with $D_1\mid a$ imply $D=8D_1\mid a$.
\end{proof}
We next employ the above proposition together with some results from \cite{M3} and \cite{M4} to show that under certain assumptions Conjecture \ref{GAC} holds for linear polynomials.
\begin{proposition}
\label{lineardensity}
Let $f(x)=ax+b$, $a>0$, and $(a, b)=1$.  Let $g$ be a square-free integer with the property that all the primes produced by $f(x)$ (except finitely many) stay inert in $\mathbb{Q}(\sqrt{g})$. Then, assuming the generalized Riemann hypothesis for Dedekind zeta function of number fields $\mathbb{Q}(e^{2\pi i/a}, e^{2\pi i/d}, g^{1/d})$ with $d$ square-free, we have
$$\#\{0\leq n\leq x;~an+b=p~ \text{is an Artin prime for}~g\}= \frac{a\delta(f)}{\varphi(a)}\frac{x}{\log{x}}+o\left(\frac{x}{\log{x}} \right),$$
where $\varphi (.)$ is the Euler function and 
$$\delta(f)=\prod\limits_{\stackrel{q > 2}{q \mid (a,b-1)}}\left( 1 - \frac{1}{q} \right) \prod\limits_{\stackrel{q > 2}{q \nmid a}} 
\left( 1 - \frac{1}{q(q-1)} \right).$$
\end{proposition}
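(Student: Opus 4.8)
The plan is to adapt Hooley's conditional proof of Artin's conjecture \cite{Hoo}, in the refined form worked out by Moree for primes in arithmetic progressions \cite{M3,M4}; the new ingredient is that the inertness hypothesis is precisely what removes the even part of the relevant M\"obius sum, leaving a sum that factors into the Euler product \eqref{lineardelta}.

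Write $p=f(n)=an+b$; since $(a,b)=1$ the primes being counted are exactly the primes $p\le ax+b$ lying in the residue class $b\imod{a}$, and after discarding the finitely many $p\mid 2g$ we may assume $p\nmid 2g$. For a squarefree positive integer $k$ put $L_{g,k}=\mathbb{Q}(\zeta_k,g^{1/k})$ and
$$P(y,k)=\#\{p\le y:\ p\equiv b\imod{a},\ p\ \text{splits completely in}\ L_{g,k}\}.$$
By Lemma \ref{Conditions for Primitive Roots} (equivalently: for $p\nmid 2g$, $g$ is a primitive root mod $p$ iff $p$ splits completely in no $L_{g,q}$) together with inclusion--exclusion over the primes $q$ with $g^{(p-1)/q}\equiv1\imod{p}$,
$$\#\{p\le y:\ p\equiv b\imod{a},\ p\ \text{is an Artin prime for}\ g\}=\sum_{k=1}^{\infty}\mu(k)P(y,k).$$
If $2\mid k$ then $g^{1/2}=(g^{1/k})^{k/2}\in L_{g,k}$, so every prime counted by $P(y,k)$ splits in $\mathbb{Q}(\sqrt g)$; by hypothesis only finitely many primes of the form $f(n)$ do, so $P(y,k)=O(1)$ for every even $k$, with an absolute implied constant. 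Hence the even $k$ contribute $o(y/\log y)$ over any range $k\le\sqrt y\log y$, and we are reduced to estimating $\sum_{2\nmid k}\mu(k)P(y,k)$.

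Next I would run Hooley's argument on the odd $k$. For odd squarefree $k$, the condition ``$p\equiv b\imod{a}$ and $p$ splits completely in $L_{g,k}$'' is a Chebotarev condition in $M_k:=\mathbb{Q}(\zeta_{\operatorname{lcm}(a,k)},g^{1/k})=\mathbb{Q}(e^{2\pi i/a},e^{2\pi i/k},g^{1/k})$ --- precisely one of the fields for which GRH is assumed --- picking out the primes whose Frobenius restricts to the identity on $L_{g,k}$ and to $\zeta_a\mapsto\zeta_a^b$ on $\mathbb{Q}(\zeta_a)$; since both restriction maps are conjugation-invariant, these conditions cut out a union of conjugacy classes of size $c_k\in\{0,1\}$. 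The effective Chebotarev density theorem under GRH then gives $P(y,k)=\tfrac{c_k}{[M_k:\mathbb{Q}]}\operatorname{Li}(y)+O(\sqrt y\,\log(y\operatorname{lcm}(a,k)))$. Splitting the range of $k$ at the Hooley cutoffs $\tfrac16\log y$, $\sqrt y/\log^2 y$ and $\sqrt y\log y$ and treating the four blocks as in \cite{Hoo}, with \cite{M3,M4} supplying the progression bookkeeping --- the first via the Chebotarev main term and the absolute convergence of $\sum_k 1/(k\varphi(k))$, the middle two via the Chebotarev error term, $[M_k:\mathbb{Q}]\gg q(q-1)$, Brun--Titchmarsh and $\sum_{q>\xi_1}1/q^2\ll 1/\xi_1$, and the last via the observation that $q\mid p-1$ with $q>\sqrt y\log y$ and $g^{(p-1)/q}\equiv1\imod{p}$ forces $p\mid g^{m}-1$ for the single value $m=(p-1)/q<\sqrt y/\log y$ --- one obtains
$$\sum_{2\nmid k}\mu(k)P(y,k)=\left(\sum_{2\nmid k}\mu(k)\frac{c_k}{[M_k:\mathbb{Q}]}\right)\operatorname{Li}(y)+o\!\left(\frac{y}{\log y}\right).$$

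Finally I would evaluate the constant. For odd squarefree $k$ and $g$ squarefree there is no entanglement: $[L_{g,k}:\mathbb{Q}]=k\varphi(k)$, and since $\mathbb{Q}(\zeta_k)$ is the maximal abelian subfield of $L_{g,k}$ one has $\mathbb{Q}(\zeta_a)\cap L_{g,k}=\mathbb{Q}(\zeta_{\gcd(a,k)})$; hence $[M_k:\mathbb{Q}]=\varphi(a)k\varphi(k)/\varphi(\gcd(a,k))$, while $c_k=1$ or $0$ according as $\gcd(a,k)\mid b-1$ or not. The resulting sum factors into an Euler product whose factor at an odd prime $q$ is $1-\tfrac1{q(q-1)}$ if $q\nmid a$, is $1-\tfrac1q$ if $q\mid(a,b-1)$, and is $1$ (the prime $q$ being forced out of every admissible $k$, as $q\mid p-1$ never holds) if $q\mid a$ but $q\nmid b-1$; comparing with \eqref{lineardelta} this gives $\sum_{2\nmid k}\mu(k)c_k/[M_k:\mathbb{Q}]=\delta(f)/\varphi(a)$. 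Since $y=ax+b$ and $\operatorname{Li}(y)\sim ax/\log x$, the stated asymptotic follows. I expect the main obstacle to be assembling the Hooley-type four-range error analysis in the arithmetic-progression setting out of the ingredients in \cite{M3,M4} and verifying that the constant it produces matches $\delta(f)/\varphi(a)$; the Euler factorisation above shows this matching is essentially forced, so the estimate $\sum_{2\nmid k}\mu(k)P(y,k)=\tfrac{\delta(f)}{\varphi(a)}\operatorname{Li}(y)+o(y/\log y)$ is really the crux.
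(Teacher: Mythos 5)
Your proposal is sound and arrives at the correct constant, but it takes a genuinely different --- and much longer --- route than the paper's own proof, which is essentially a short deduction from the literature. The paper quotes \cite[Theorem 2]{M3} for the existence of $\delta_g(f)$ under the stated GRH; it then observes that the inertness hypothesis forces $\tau_D^{-}(f)=1$, whence $D\mid a$ by Proposition \ref{linearcondition}, which places $(g,a)$ into one of the three congruence cases ($g\equiv 1\pmod 4$; or $g\equiv 2\pmod 4$ and $8\mid a$; or $g\equiv 3\pmod 4$ and $4\mid a$) covered by the explicit density formula of \cite[Theorem 3]{M4}, namely $\delta_g(f)=\prod_{q\mid(a,b-1)}\left(1-\tfrac1q\right)\prod_{q\nmid a}\left(1-\tfrac{1}{q(q-1)}\right)\left(1-\left(\tfrac{g}{b}\right)\right)$; finally it checks $\left(\tfrac{g}{b}\right)=-1$ (again from inertness), so the last factor equals $2$ and cancels the $q=2$ Euler factor, leaving $\delta(f)$. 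You instead re-derive the relevant case of Moree's theorem from scratch by running Hooley's method in the progression $b\bmod a$: your use of inertness to annihilate the even part of the M\"obius sum, and your computation of $[M_k:\mathbb{Q}]=\varphi(a)k\varphi(k)/\varphi(\gcd(a,k))$ and $c_k=1$ exactly when $\gcd(a,k)\mid b-1$ via the maximal abelian subfield of $L_{g,k}$, are precisely the mechanism hidden inside the citations, and your Euler factorization correctly reproduces $\delta(f)/\varphi(a)$. The paper's route buys brevity and rigour-by-citation, at the price of having to verify the congruence hypotheses of \cite[Theorem 3]{M4} --- which is exactly where Proposition \ref{linearcondition} earns its keep; your route makes transparent why inertness yields exactly $\delta(f)$ (the factor $1-\left(\tfrac{g}{b}\right)=2$ being the shadow of the discarded even $k$), but leaves the four-range error analysis as a sketch, which is tolerable here only because that analysis is precisely what \cite{Hoo}, \cite{M3} and \cite{M4} supply.
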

\begin{proof}
Under our assumptions by \cite[Theorem 2]{M3} the density $\delta_g(f)$ of Artin primes produced by $f(x)$ for $g$ exists. Moreover since all the primes produced by $f(x)$ (except finitely many) stay inert in $\mathbb{Q}(\sqrt{g})$ we conclude that $\tau_D^{-}(f)=1$, where $D$ is the discriminant of $\mathbb{Q}(\sqrt{g})$, and so by Proposition \ref{linearcondition} we have $D\mid a$. Therefore $g=4k+1$, or $g=4k+2$ and $8\mid a$, or $g=4k+3$ and $4\mid a$. For all these cases from \cite[Theorem 3]{M4} we have
\begin{equation}
\label{above}
\delta_g(f) = \prod\limits_{{q \mid (a,b-1)}}\left( 1 - \frac{1}{q} \right) \prod\limits_{{q \nmid a}} 
\left( 1 - \frac{1}{q(q-1)} \right) \left( 1-\left(\frac{g}{b}\right) \right).
\end{equation}
We observe that in all the cases $\left(\frac{g}{b}\right)=-1$. For example if $g=4k+3$ and $4\mid a$ then
for a prime in the form $an+b$ we have $$-1=\left( \frac{D}{an+b}\right) =\left( \frac{D}{Da_1n+b}\right)= \left( \frac{D}{b}\right)=\left( \frac{g}{b}\right).$$
So in \eqref{above} we have $\left(\frac{g}{b}\right)=-1$ and thus $\delta_g(f)=\delta(f)$.
\end{proof}

We now give a procedure for finding linear Artin prime producing polynomials of large length.
\medskip\par
\noindent{\bf Linear Procedure}
\begin{enumerate}
 \item Select an integer $a>0$ which is the product of many small primes. 
\item Select an integer $B$ such that $(a, B+1)=1$ and moreover either $(a,B)=1$ or $a$ and $B$ only have very large common prime divisors. 
\item Form $f(n) = an + b$ where $b = B+1$.
\item Search through divisors $D$ of $a$ that are fundamental discriminants and find ones satisfying $\tau_D^{-}(f) = 1$, then select $g$ such that $\bQ(\sqrt{g})$ has the fundamental discriminant $D$.
\item Compute $\ell_g(f)$.
\end{enumerate}

Note that the conditions on $a$ and $b$ in steps (1) and (2) ensure that $\delta(f)$ given in \eqref{lineardelta} is large, and step (4)  guarantees that the hypotheses of Conjecture \ref{GAC} hold, so that  $\delta(f)=\delta_g(f)$. So it is very likely that $\ell_g(f)$ is very large.

We have implemented the above procedure and produced many examples of Artin prime producing linear polynomials of large length. In particular we found a linear polynomial that has $1008420$ as a primitive root for the first $6355$ primes produced by the polynomial. 
We present a sample of our results in Table \ref{LinTable}. {
In the second example in Table \ref{LinTable} we have $a=\prod_{3\leq q\leq 127} q$  and in all others $a=\prod_{3\leq q\leq 101} q$.

\begin{table}
\centering

\begin{tabular}{|l|l|l|l|}
\hline
$f(x)$ &  $g$ &   $\ell_g(f)$&  $\delta(f)$ \\ \hline
\scriptsize $116431182179248680450031658440253681535x+$ & \scriptsize $1008420$ & \scriptsize $6355$ & \scriptsize $0.998271$\\
\scriptsize $33158669235192590202725416070516726471730038$ & & &\\ \hline

\scriptsize $2007238469666518094547220599513022568322942623865x+$ & \scriptsize $5$ & \scriptsize $6205$ & \scriptsize $0.998680$\\
\scriptsize $11969745093777650688032495128870012351454520519469655612$ & & &\\ \hline

\scriptsize $116431182179248680450031658440253681535x+$ & \scriptsize $9680$ & \scriptsize $5872$ &\scriptsize $0.998271$\\
\scriptsize $24446589597448128439371347196066304497192128$ & & &\\ \hline


\scriptsize $116431182179248680450031658440253681535x+$ & \scriptsize $19773$ & \scriptsize $5788$ &\scriptsize $0.998271$ \\
\scriptsize $28924300001697674192118664716361580581665158$ & & &\\ \hline

\scriptsize $116431182179248680450031658440253681535x+$ & \scriptsize $887040$ & \scriptsize $5749$ &\scriptsize $0.998271$ \\
\scriptsize $44080845573063550418381985885480043829165318$ & & &\\ \hline
\end{tabular}
\hskip2cm
\caption{Linear Polynomials With Long Artin Prime Production Lengths}
\label{LinTable}
\end{table}

\section{The Quadratic Case}
\label{The Quadratic Case}

In \cite{M2} a method for generating integers $g$ and quadratic polynomials $f(x)$ with large $\ell_g(f)$ is presented and reported that Yves Gallot by implementing that method has found the quadratic $f(x)=54151x^2 + 160744427648 x+ 119471867164612830$ of negative discriminant and $g=17431902$ with $\ell_g(f)=31082$.
Here we give a modification of the method presented in \cite{M2} to include a different range of quadratic polynomials.  We then employ our modified method to find quadratics with large Artin prime production lengths.  
The three quadratic $f(x)$ of negative discriminant and integers $g$ with $\ell_g(f)>31082$ found in our search are presented in Table \ref{QuadTable}.
We will make use of the following result.
\begin{proposition}
\label{quadratic condition}
{\bf (Moree)} Assume that $f(x) = ax^2 + bx + c$ produces infinitely many primes and that the primes produced by $f(x)$ are uniformly distributed among allowable congruence classes. Let $d=b^2-4ac$ be the discriminant of $f(x)$. 
 Let $a_{D_1}(f)$ be as defined in Theorem \ref{Theorem 1}. Then
 $$a_{D_1}(f)=\left( \frac{c}{(D_1, a, d)} \right) \left( \frac{a}{D_1/(D_1, a, d)}  \right) \prod_{\stackrel{q\mid D_1}{q\nmid ad}} \frac{-1}{q-1-\left( \frac{d}{q}\right)}.$$
Moreover if $\tau_D^{-}(f) = 1$, then $D\mid 24ad$. (Note that the first two factors in the formula for $a_{D_1}(f)$ are Kronecker symbols.) 
\end{proposition}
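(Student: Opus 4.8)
The plan is to establish the closed-form expression for $a_{D_1}(f)$ by computing the character sum
$$S_d := \sum_{r \imod{d}} \left(\frac{f(r)}{d}\right)$$
via multiplicativity, and then feed the result into Theorem \ref{Theorem 1} to pin down the divisibility $D \mid 24ad$. Since $a_{D_1}(f)$ is multiplicative over odd square-free $D_1$, it suffices to evaluate $a_q(f)$ for each odd prime $q \mid D_1$ and multiply. So the first step is a local calculation at a single odd prime $q$: compute $\sum_{r \imod{q}} \left(\frac{ar^2+br+c}{q}\right)$ and $\#\{r \imod q \mid (f(r),q)=1\}$, splitting into cases according to whether $q \mid a$, $q \mid d$ (but $q \nmid a$), or $q \nmid ad$. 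In the generic case $q \nmid ad$, completing the square gives $4a f(r) = (2ar+b)^2 - d$, so the sum becomes $\left(\frac{a}{q}\right)\sum_{t}\left(\frac{t^2-d}{q}\right)$, and the standard evaluation $\sum_t \left(\frac{t^2-d}{q}\right) = -1$ (for $q \nmid d$) yields $a_q(f) = \left(\frac{a}{q}\right)\cdot \frac{-1}{q-1-\left(\frac{d}{q}\right)}$, matching the product factor in the statement. The degenerate cases $q \mid a$ (where $f$ reduces to linear, handled as in the linear case giving roughly $\left(\frac{c}{q}\right)$ when also $q \mid b$, i.e.\ $q \mid d$) and $q \mid d$, $q \nmid a$ (where $f(r) \equiv a(r + \overline{2a}b)^2 \imod q$ is a perfect square times $a$) produce the Kronecker symbols $\left(\frac{c}{(D_1,a,d)}\right)$ and $\left(\frac{a}{D_1/(D_1,a,d)}\right)$ respectively; assembling these local contributions by multiplicativity gives the displayed formula for $a_{D_1}(f)$.

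For the divisibility conclusion, I would argue as in the proof of Proposition \ref{linearcondition}. If $\tau_D^-(f) = 1$, then examining \eqref{tau} forces $a_{D_1}(f) \neq 0$ in all four cases (when $D$ is odd directly, and when $D$ is even because the bracketed combination of $\alpha_j$'s has absolute value at most $1$, so $a_{D_1}(f) = \pm 1$ is needed). From the formula just derived, $a_{D_1}(f) \neq 0$ requires that no prime $q \mid D_1$ has $q \nmid ad$ \emph{and} $q - 1 - \left(\frac{d}{q}\right)$ causing a genuine cancellation — more precisely, one checks that every odd prime dividing $D_1$ must divide $ad$, except possibly $q = 3$, which can survive because $3 - 1 - \left(\frac{d}{3}\right)$ can equal $\pm 1$, i.e.\ the local density of allowable classes mod $3$ can be $1$ (this is the source of the factor $24 = 8 \cdot 3$ rather than $8$). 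Hence $D_1 \mid 3ad$. Then, exactly as in Proposition \ref{linearcondition}, the $2$-part of $D$ (a divisor of $8$) is controlled by the vanishing of appropriate $\alpha_j$'s: forcing $\alpha_j = 0$ for certain residues $j \imod 8$ imposes $2$-divisibility constraints on the coefficients of $f$, and chasing these through shows $8 \mid 24ad$ absorbs whatever power of $2$ is needed. Combining, $D \mid 24ad$.

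The main obstacle I anticipate is the case analysis at $q = 2$ and $q = 3$ — these small primes are where the clean generic formula breaks and where the extra factor of $24$ (as opposed to, say, $8ad$ or $ad$) enters. For $q = 3$, one must carefully track when $3 - 1 - \left(\frac{d}{3}\right) \in \{1, 3\}$ and how that interacts with $a_{D_1}(f) = \pm 1$; for $q = 2$, the argument via the $\alpha_j$'s (reading off which residues mod $8$ are hit by $f$) requires handling all four branches of \eqref{tau} together with the parity of $a$, $b$, $c$ and the residue of $d$ mod powers of $2$. The character sum evaluation itself is standard (completing the square plus the elementary identity for $\sum_t \left(\frac{t^2 - d}{q}\right)$), so the bookkeeping of these degenerate local factors — and verifying they never force $a_{D_1}(f) = 0$ unless a prime divides $24ad$ — is the real content. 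Since Proposition \ref{quadratic condition} is attributed to Moree, I would cite \cite[Proposition 3]{M2} for the full details and content myself here with indicating the structure of the argument.
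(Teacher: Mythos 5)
Your proposal is correct, but note that the paper itself gives no argument at all for this proposition: its entire proof is the citation to \cite[Propositions 2 and 3]{M2}, so what you have written is a reconstruction of Moree's argument rather than a parallel to anything in this paper. Your reconstruction is sound: the local computation via $4af(r)=(2ar+b)^2-d$ and $\sum_t\left(\frac{t^2-d}{q}\right)=-1$ gives $a_q(f)=-\left(\frac{a}{q}\right)/\left(q-1-\left(\frac{d}{q}\right)\right)$ for $q\nmid 2ad$, and the degenerate cases $q\mid(a,d)$ (where $f\equiv c$) and $q\mid d$, $q\nmid a$ (where $f$ is $a$ times a square) produce exactly the two Kronecker symbols in the statement; the case $q\mid a$, $q\nmid d$ gives $a_q(f)=0$, consistent with $\left(\frac{a}{q}\right)=0$ there. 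Two small refinements. First, for the divisibility the key point is that each local factor has absolute value at most $1$, so $a_{D_1}(f)=\pm1$ forces every factor to be $\pm1$; for $q\nmid ad$ this means $q-1-\left(\frac{d}{q}\right)=1$ (it is a positive integer, so it cannot be $-1$), which forces $q=3$ and $d\equiv1\imod{3}$ --- this is the precise source of the factor $3$, slightly sharper than your ``can equal $\pm1$.'' Second, the $2$-part needs no $\alpha_j$ chasing at all: once $D_1\mid 3ad$, the fact that $D$ is a fundamental discriminant means $D$ divides $8D_1$, and $8D_1\mid 24ad$ automatically. With those adjustments your outline is exactly the proof the citation points to.
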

\begin{proof}
See \cite[Propositions 2 and 3]{M2}.
\end{proof}

Our procedure for quadratic polynomials is the following.
\medskip\par
\noindent{\bf Quadratic Procedure}

\begin{enumerate}
 \item Select an integer $\Delta$ where $\left(\frac{\Delta}{q}\right)=-1$ for many  consecutive primes $q\geq 3$.  
 \item Select an even $b$ and express it as $b = 2b'$.
 \item Factor $-\Delta + (b')^2$ into $a(c-1)$ such that $a>0$, $(a, b, c)=1$, $2\nmid (a+b, c)$, $b^2-4ac$ is not square, and moreover $a$ does not have small odd prime factors.
 \item Form $f(n) = an^2 + bn + c$.
 \item Search through divisors $D$ of $24a(b^2-4ac)$ that are fundamental discriminants and find ones satisfying $\tau_D^{-}(f)=1$, then select $g$ such that $\mathbb{Q}(\sqrt{g})$ has the fundamental discriminant $D$. 
\item Compute $\ell_g(f)$.
\end{enumerate}

We briefly explain why this works. 
By employing \eqref{deltaf} for the polynomial constructed with our method we obtain
\begin{equation}
\label{Quadratic Delta 2}
\delta(f) =  \prod\limits_{\stackrel{q \mid a}{q\mid (b, c-1)}} \left(1 - \frac{1}{q}\right)\prod\limits_{\stackrel{q\mid a}{ q\nmid b}} \left(1 - \frac{1}{q(q-1)}\right) \prod\limits_{{q \nmid a}}\left(1 - \frac{1 + \left( \frac{4(b')^2 - 4a(c-1)}{q}\right)}{q\left(q-1-\left(\frac{4(b')^2 - 4ac}{q}\right)\right)}\right),
\end{equation}
where $q$ ranges over the odd primes. Since $\left(\frac{4(b')^2 - 4a(c-1)}{q}\right) = \left(\frac{\Delta}{q}\right)=-1$ 
for many small $q$ and $a$ does not have small prime factors the value of $\delta(f)$ will be close to $1$. Therefore this polynomial will produce a high proportion of Artin primes for $g$ given in step (5) of the procedure.  

Finding $\Delta$ is a crucial step in this method. Such $\Delta$ is related to finding quadratics with a large prime producing constant. Algorithms for finding many examples of such  $\Delta$ can be found in \cite{JW}.


Our method is similar to the one originally presented in \cite{M2}. In \cite{M2}, Moree considered  polynomials of the form $f(x) = 2^\alpha d_1 x^2 \pm 2^\alpha d_2 + 1$ where $d_1d_2=\Delta$ and $\alpha\in \mathbb{Z}^+$, where $\left( \frac{\Delta}{p}\right)=-1$ for many consecutive primes $q\geq 3$. The form that we use allows us to vary over $b$ in the family $f(x)=ax^2+bx+c$. 
In \cite{M2} it is reported that Yves Gallot has found the quadratic $f (x) = 64d_1 (x +
728069)^2-64d_2 +1$ of positive discriminant with $d_1= 230849$, $d_1d_2=\Delta = 4472988326827347533$ (taken from \cite[Table 4.3]{JW}) and $g=66715361$ with  $\ell_g(f)=25581$. We also implemented the method given in \cite{M2} and 
using $d_1=373$, $\Delta = 2430946649400343037$ (taken from \cite[Table 4.6]{JW}) we found  $f (x) = 32d_1 (x + 4685199)^2 - 32d_2 + 1$ of positive discriminant and $g=675$
with $\ell_g(f)= 26187$. 
By using the same method we also found 
$f(x)=x^2+3543608x+13598861653501886604$ of negative discriminant and $g=69870828$ with  $\ell_g(f)=35521$. 

\begin{table}
\centering


\begin{tabular}{|l|l|l|l|}
\hline
$f(x)$ &  $g$ &   $\ell_g(f)$&  $\delta(f)$ \\ \hline
\footnotesize $x^2+108656x+2038991585917703148$ & \footnotesize $29823674796$ & \footnotesize $37951$ & \footnotesize $0.999553$\\ \hline
\footnotesize $x^2+609932x+2038991675970432720$ & \footnotesize $70882491394$ & \footnotesize $36041$ & \footnotesize $0.999553$ \\ \hline
\footnotesize $x^2+172676x+2038991590420421808$ & \footnotesize $122605515633473715037016$ & \footnotesize $31801$ & \footnotesize $0.999553$\\ \hline
\end{tabular}
\hskip2cm
\caption{{\tiny Quadratics of Negative Discriminant With Long Artin Prime Production Lengths}}
\label{QuadTable}
\end{table}

\section{The Cubic Case}
\label{The Cubic Case}

In this section we study Artin prime producing polynomials of the form $f(x)=ax^3+b$.
The following proposition plays an important role in our investigations. 
\begin{proposition}
\label{Prop6.4.14}
Let $q$ be a prime number not dividing the integer $m$. The decomposition of $x^3 - m$ modulo $q$ is as follows.

\begin{enumerate}
 \item If $q\equiv 2 \imod{3}$, then $x^3 - m= (x-u)(x^2 +ux + w)$ in $\bF_q[x]$.
 \item If $q \equiv 1 \imod{3}$ and $m^{(q-1)/3} \equiv 1 \imod{q}$, then $x^3 - m= (x-u_1)(x-u_2)(x-u_3)$ in $\bF_q[x]$, where $u_1$, $u_2$, and $u_3$ are distinct elements of $\bF_q$.
 \item If $q \equiv 1 \imod{3}$ and $m^{(q-1)/3} \not\equiv 1 \imod{q}$, then $x^3 - m$ is irreducible in $\bF_q[x]$.
 \item If $q = 3$, then $x^3 - m = (x-a)^3 $ in $\bF_q[x]$.
\end{enumerate}
\end{proposition}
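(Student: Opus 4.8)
The plan is to treat the four cases according to the behaviour of the residue $q \imod 3$, exactly as the statement is organized. The underlying principle throughout is that the factorization type of $x^3-m$ over $\bF_q$ is governed by whether the map $t\mapsto t^3$ on $\bF_q^\times$ is a bijection and, when it is not, whether $m$ lies in its image.

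First I would dispose of case (4): if $q=3$, then in characteristic $3$ we have the Frobenius identity $(x-a)^3 = x^3 - a^3$, so choosing $a$ with $a^3\equiv m \imod 3$ (which is possible since cubing is the identity on $\bF_3$, indeed $a\equiv m$ works) gives $x^3-m=(x-a)^3$. Next, case (1): when $q\equiv 2\imod 3$, $\gcd(3,q-1)=1$, so the endomorphism $t\mapsto t^3$ of the cyclic group $\bF_q^\times$ is an automorphism; hence there is a unique $u\in\bF_q$ with $u^3=m$, and $u\neq 0$ since $q\nmid m$. Then $x^3-m$ has exactly the root $u$ in $\bF_q$, and dividing out gives $x^3-m=(x-u)(x^2+ux+w)$ with $w=m/u=u^2$; the quadratic factor is irreducible because if it had a root it would be a second cube root of $m$, contradicting uniqueness. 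For cases (2) and (3), assume $q\equiv 1\imod 3$, so the cubes in $\bF_q^\times$ form the index-$3$ subgroup, which by Euler's criterion for cubic residues is exactly $\{x : x^{(q-1)/3}=1\}$. If $m^{(q-1)/3}\equiv 1$, then $m$ is a cube, say $m=u_1^3$; the three roots of $x^3-m$ are $u_1,\zeta u_1,\zeta^2 u_1$ where $\zeta$ is a primitive cube root of unity in $\bF_q$ (which exists since $3\mid q-1$), and these are distinct because $u_1\neq 0$ and $1,\zeta,\zeta^2$ are distinct. If $m^{(q-1)/3}\not\equiv 1$, then $m$ is not a cube, so $x^3-m$ has no root in $\bF_q$; a cubic with no linear factor over a field is irreducible, which settles case (3).

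The only genuinely substantive point is the identification, in the case $q\equiv 1\imod 3$, of the group of cubes with the kernel of $t\mapsto t^{(q-1)/3}$, together with the existence of the primitive cube root of unity $\zeta\in\bF_q$ needed to exhibit all three roots in case (2); both follow at once from the cyclicity of $\bF_q^\times$, so I expect no real obstacle here. One should also remark that the hypothesis $q\nmid m$ is used in cases (1)--(3) to guarantee the roots are nonzero (hence, in case (2), distinct) and that no spurious factor $x$ appears. I would therefore present the argument compactly, quoting the structure of $\bF_q^\times$ and Euler's criterion for cubic residues as standard facts.

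\begin{proof}
We use throughout the fact that $\bF_q^\times$ is cyclic of order $q-1$, so that for any $k$ the subgroup of $k$-th powers has index $\gcd(k,q-1)$, and $t^{(q-1)/\gcd(k,q-1)}=1$ precisely when $t$ is a $k$-th power.

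If $q=3$, then $(x-m)^3 = x^3 - m$ in $\bF_3[x]$ by the Frobenius identity and the fact that $a^3=a$ for all $a\in\bF_3$; this gives (4) with $a=m$.

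Suppose $q\equiv 2\imod 3$, so $\gcd(3,q-1)=1$ and $t\mapsto t^3$ is a bijection of $\bF_q^\times$. Hence there is a unique $u\in\bF_q^\times$ with $u^3=m$ (note $u\neq 0$ since $q\nmid m$), and $x-u$ divides $x^3-m$, say $x^3-m=(x-u)(x^2+ux+w)$ with $w=u^2$. If the quadratic factor had a root $v\in\bF_q$, then $v^3=m$ as well, forcing $v=u$; but then $x^2+ux+w=(x-u)^2=x^2-2ux+u^2$, so $3u=0$ in $\bF_q$, impossible since $q\neq 3$ and $u\neq 0$. Thus the quadratic factor is irreducible, proving (1).

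Suppose $q\equiv 1\imod 3$. Then $3\mid q-1$, so $\bF_q^\times$ contains a primitive cube root of unity $\zeta$, and the cubes in $\bF_q^\times$ are exactly the elements $t$ with $t^{(q-1)/3}=1$. If $m^{(q-1)/3}\equiv 1\imod q$, then $m=u_1^3$ for some $u_1\in\bF_q^\times$, and the roots of $x^3-m$ are $u_1$, $u_2=\zeta u_1$, $u_3=\zeta^2 u_1$, which are distinct because $u_1\neq 0$ and $1,\zeta,\zeta^2$ are distinct; hence $x^3-m=(x-u_1)(x-u_2)(x-u_3)$, proving (2). If instead $m^{(q-1)/3}\not\equiv 1\imod q$, then $m$ is not a cube in $\bF_q^\times$, so $x^3-m$ has no root in $\bF_q$. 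A cubic polynomial over a field with no root in that field has no linear factor, hence no factor of degree $1$ or $2$, and is therefore irreducible; this proves (3).
\end{proof}
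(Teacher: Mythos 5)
Your proof is correct. Note that the paper itself gives no argument for this proposition at all: it simply cites Proposition 6.4.14 of Cohen's \emph{A Course in Computational Algebraic Number Theory}. Your self-contained derivation from the cyclicity of $\bF_q^\times$ (cubing is a bijection when $\gcd(3,q-1)=1$, the cubes form the kernel of $t\mapsto t^{(q-1)/3}$ when $3\mid q-1$, the Frobenius identity in characteristic $3$) is exactly the standard route and is, if anything, more useful to a reader than the bare citation. One tiny point of presentation in case (1): when you write that a root $v$ of the quadratic factor forces $x^2+ux+w=(x-u)^2$, you are implicitly using that \emph{both} roots of that quadratic are cube roots of $m$ and hence both equal $u$; it is slightly cleaner to just substitute $v=u$ into $x^2+ux+u^2$ and read off $3u^2=0$, which is impossible since $q\neq 3$ and $u\neq 0$. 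This is cosmetic, not a gap.
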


\begin{proof}
See \cite[Proposition 6.4.14]{C}.
\end{proof}

\subsection{Proportion of Primitive Roots}
\label{Proportion of Primitive Roots}

We evaluate $\delta(f)$ for $f(x) = ax^3 + b$ with $(a, b)=1$ by computing the number of solutions to the following two congruence equations using Proposition \ref{Prop6.4.14}. 

\begin{align*}
\#\{s \imod{q}  |~ f(s) \stackrel{q}{\equiv} 1\} &= \left\{\begin{array}{lll}
     1 & \hspace{-5pt} \text{if $q\nmid a, q\mid (b-1) $} \quad &(a)\\
     1 & \hspace{-5pt} \text{if $q\nmid a, q =3$} \quad &(b)\\
     1 & \hspace{-5pt} \text{if $q\nmid a, q \equiv 2 \imod{3}$} \quad &(c)\\
     3 & \hspace{-5pt} \text{if $q\nmid a, q \equiv 1 \imod{3}, (a^2(b-1))^\frac{q-1}{3} \equiv 1 \imod{q}$} \quad &(d)\\
     0 & \hspace{-5pt} \text{if $q\nmid a, q \equiv 1 \imod{3}, (a^2(b-1))^\frac{q-1}{3} \not\equiv 1 \imod{q}$} \quad &(e)\\
     q & \hspace{-5pt} \text{if $q\mid a, q \mid (b-1)$} \quad &(f)\\
     0 & \hspace{-5pt} \text{if $q\mid a, q \nmid (b-1)$} \quad &(g)
\end{array}
\right.
\end{align*}
\begin{align*}
\#\{s \imod{q}  |~ f(s) \stackrel{q}{\equiv} 0\} &= \left\{\begin{array}{lll}
     1 & \hspace{-5pt} \text{if $q\nmid a, q\mid b $} \quad &(A)\\
     1 & \hspace{-5pt} \text{if $q\nmid a, q =3$} \quad &(B)\\
     1 & \hspace{-5pt} \text{if $q\nmid a, q \equiv 2 \imod{3}$} \quad &(C)\\
     3 & \hspace{-5pt} \text{if $q\nmid a, q \equiv 1 \imod{3}, (a^2b)^\frac{q-1}{3} \equiv 1 \imod{q}$} \quad &(D)\\
     0 & \hspace{-5pt} \text{if $q\nmid a, q \equiv 1 \imod{3}, (a^2b)^\frac{q-1}{3} \not\equiv 1 \imod{q}$} \quad &(E)\\
     0 & \hspace{-5pt} \text{if $q\mid a$} \quad &(F)
\end{array}
\right.
\end{align*}
Then the cubic version of \eqref{deltaf} is

\begin{eqnarray}
\label{cubicdf}
\delta(f) & = & \prod\limits_{{\stackrel{\stackrel{(a),((B) \text{ or } (C))}{(b),((A) \text{ or } (B))}}{(c),((A) \text{ or } (C))}}}\left(1 - \frac{1}{q(q-1)}\right) \prod\limits_{{(a),(D)}}\left(1 - \frac{1}{q(q-3)}\right) \prod\limits_{{(a),(E)}}\left(1 - \frac{1}{q^2}\right)\nonumber\\
&  &  \prod\limits_{{(d),(A)}}\left(1 - \frac{3}{q(q-1)}\right)  \prod\limits_{{(d),(D)}}\left(1 - \frac{3}{q(q-3)}\right) \prod\limits_{{(d),(E)}}\left(1 - \frac{3}{q^2}\right) \prod\limits_{{(f),(F)}}\left(1 - \frac{1}{q}\right), 
\end{eqnarray}
where $q$ ranges over the odd primes. The letters in the subscripts of the products refer to the conditions in the number of solutions of congruences modulo $q$. For example $(a),((B) \text{ or } (C))$ indicates that $q$ either satisfies the conditions (a) and (B) or it satisfies the conditions (a) and (C).

Observe that for $f(x)=ax^3+b$  we have
$$
C(f)=\frac{1}{3}\prod\limits_{{(E) \text{ or } (F)}}\left(1 + \frac{1}{q-1}\right) 
\prod\limits_{{(D)}}\left(1 - \frac{2}{q-1}\right),$$
where $q$ ranges over the odd primes. For $f_1(x)=ax^3+(b-1)$ note that if $C(f_1)$ is large then we expect that $(a^2(b-1))^\frac{q-1}{3} \not\equiv 1 \imod{q}$ for many small consecutive primes $q$ where $q\nmid a$ and $q\equiv 1 \imod{3}$. Considering this fact in \eqref{cubicdf} shows that if $(a, b-1)$ does not have small odd prime divisors, then $\delta(f)$ will be more likely to be large. So the problem of finding $f(x)=ax^3+b$ with large $\delta(f)$ is related to finding integers $a$ and $b$ such that $f_1(x)=ax^3+(b-1)$ has large $C(f_1)$. 
\subsection{A formula for $a_{D_1}(f)$}
In order to find an expression for $\tau_D^{-}(f)$ for cubic polynomials
we need to compute some special character sums.
For prime $q\equiv 1\imod{k}$ the Jacobsthal sum $\phi_{q,k}(E)$ and its associated sum $\psi_{q,k}(E)$ 
are defined as 
\begin{align*}
\phi_{q,k}(E) = \sum\limits_{u=1}^{q-1}\left(\frac{u}{q}\right)\left(\frac{u^k + E}{q}\right), ~~\textrm{and}~~
\psi_{q,k}(E) = \sum\limits_{u=1}^{q-1}\left(\frac{u^k + E}{q}\right).
\end{align*}
For odd $k$ it is known that 
$$\psi_{q,k}(E) = \left(\frac{E}{q}\right)\phi_{q,k}(\overline{E})$$
where $\overline{E}$ is the modular inverse of $E$ mod $q$ (see \cite[page 104, equation (5)]{L2}). For $k=3$ this latter identity in combination with the definition of $\psi_{q, k}(E)$ implies
\begin{equation}
\label{CubicLemma1}
\sum\limits_{m=0}^{q-1} \left(\frac{m^3 + E}{q}\right) = \left(\frac{E}{q}\right)(1+\phi_{q,3}(\bar{E})).
\end{equation}
Note that $\phi_{q,3}(\overline{E})$ can be explicitly evaluated. We will describe its computation in the next section. 
\begin{lemma}
\label{numerator}
Let $q$ be an odd prime. Then
$$\sum\limits_{m=0}^{q-1} \left(\frac{am^3+b}{q}\right) = \begin{cases}
q\left(\frac{b}{q}\right) & \text{if $q \mid a$}, \\
0 & \text{if $q \nmid a$ and $($ $q\mid b$ or $q = 3$ or $q \equiv 2 \imod{3}$ $)$}, \\
\left(\frac{b}{q}\right)(1 + \phi_{q,3}(\bar{a}^2\bar{b})) & \text{if $q \nmid a$, $q\nmid b$, $q \equiv 1 \imod{3}$}.
           \end{cases}
$$
\end{lemma}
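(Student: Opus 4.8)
The plan is to evaluate the sum $\sum_{m=0}^{q-1}\left(\frac{am^3+b}{q}\right)$ by splitting into the four cases according to the relationship between $q$ and the parameters $a$, $b$, and the congruence class of $q$ modulo $3$.

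First I would handle the case $q\mid a$. Here $am^3\equiv 0\imod q$ for every $m$, so each term is $\left(\frac{b}{q}\right)$ and the sum is $q\left(\frac{b}{q}\right)$, giving the first line. For the remaining cases $q\nmid a$, I would substitute $u=am$ (a bijection on $\bF_q$ since $q\nmid a$). Writing $m=\bar a u$ we get $am^3+b=a\bar a^3u^3+b=\bar a^2 u^3+b$, and multiplying inside the Legendre symbol by $a^2$ (a square, hence harmless) converts this into $\left(\frac{a^2(am^3+b)}{q}\right)=\left(\frac{u^3+a^2b}{q}\right)$ — more simply, the cleanest route is the direct substitution $m\mapsto \bar a^{1/3}$-type reindexing, but since cube roots need not exist I would instead argue via $\left(\frac{am^3+b}{q}\right)=\left(\frac{a}{q}\right)\left(\frac{m^3+\bar a b}{q}\right)$ and then apply \eqref{CubicLemma1} with $E=\bar a b$. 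Actually the cleanest is: $\sum_m\left(\frac{am^3+b}{q}\right)=\left(\frac{a}{q}\right)\sum_m\left(\frac{m^3+\bar ab}{q}\right)$, and then \eqref{CubicLemma1} evaluates $\sum_m\left(\frac{m^3+E}{q}\right)=\left(\frac{E}{q}\right)(1+\phi_{q,3}(\bar E))$ for $E=\bar ab$, provided $q\equiv 1\imod 3$ and $q\nmid E$. Since $\left(\frac{a}{q}\right)\left(\frac{\bar a b}{q}\right)=\left(\frac{a\bar a b}{q}\right)=\left(\frac{b}{q}\right)$ and $\overline{\bar a b}=\bar a^2 \bar b \cdot a$... — let me just record that $\overline{E}=\overline{\bar a b}=a\bar b$, so one should double-check whether the paper's $\bar a^2\bar b$ matches; in fact multiplying $E=\bar ab$ by the square $a^2$ inside $\phi_{q,3}$ is legitimate because $\phi_{q,3}(t^2 E)=\left(\frac{t^2}{q}\right)$-scaling behaves appropriately, so $\phi_{q,3}(\overline{E})=\phi_{q,3}(a\bar b)=\phi_{q,3}(\bar a^2\bar b)$ after the same square adjustment. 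This yields the third line $\left(\frac{b}{q}\right)(1+\phi_{q,3}(\bar a^2\bar b))$.

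For the middle case, I would invoke Proposition \ref{Prop6.4.14}: when $q=3$ or $q\equiv 2\imod 3$, the map $m\mapsto m^3$ is a bijection on $\bF_q$ (part (1) and part (4) show $x^3-c$ always has a unique root), so $\sum_m\left(\frac{am^3+b}{q}\right)=\left(\frac{a}{q}\right)\sum_{n\in\bF_q}\left(\frac{n+\bar ab}{q}\right)=0$ since the Legendre symbol sums to zero over a complete residue system shifted by a constant. When $q\mid b$ (and $q\nmid a$), the sum becomes $\left(\frac{a}{q}\right)\sum_m\left(\frac{m^3}{q}\right)=\left(\frac{a}{q}\right)\sum_m\left(\frac{m}{q}\right)^3\cdot(\text{cube is a square times }\left(\frac{m}{q}\right))$; more directly $\left(\frac{m^3}{q}\right)=\left(\frac{m}{q}\right)$, and $\sum_{m=0}^{q-1}\left(\frac{m}{q}\right)=0$, so the sum vanishes. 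This disposes of the second line.

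The main obstacle I anticipate is bookkeeping the precise argument of the Jacobsthal sum in the third case — making sure that the square adjustments used to pass between $\overline{\bar a b}$, $a\bar b$, and $\bar a^2\bar b$ are consistent with the normalization in \eqref{CubicLemma1} and with the identity $\psi_{q,k}(E)=\left(\frac{E}{q}\right)\phi_{q,k}(\overline E)$ — together with confirming that $q\nmid a^2b$ under the stated hypotheses so that all modular inverses exist. Everything else is routine character-sum manipulation plus a direct appeal to Proposition \ref{Prop6.4.14} for the bijectivity of cubing when $q\not\equiv 1\imod 3$.
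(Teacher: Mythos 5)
Your proposal is correct and follows essentially the same route as the paper's proof: multiply inside the Legendre symbol by the square $a^2$ (equivalently substitute $u=am$) to reduce to $\sum_{u}\left(\frac{u^3+a^2b}{q}\right)$, then use bijectivity of cubing for $q=3$ or $q\equiv 2\imod{3}$, the identity $\left(\frac{u^3}{q}\right)=\left(\frac{u}{q}\right)$ for $q\mid b$, and \eqref{CubicLemma1} with $E=a^2b$ (so that $\overline{E}=\bar a^2\bar b$ exactly) for the remaining case. The only slip is in your detour through $E=\bar a b$: the arguments $a\bar b$ and $\bar a^2\bar b$ differ by the factor $a^3$, and $\phi_{q,3}$ is invariant under multiplying its argument by a nonzero \emph{cube} (substitute $u\mapsto tu$ and use $\left(\frac{t^4}{q}\right)=1$), not by a square --- a ``square adjustment'' would in general change the cubic residue class of the argument and hence the value of $\phi_{q,3}$ by Proposition \ref{AB} --- but this detour is unnecessary since your first substitution already lands on $E=a^2b$.
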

\begin{proof}
If $q \mid a$ then
$$\sum_{m=0}^{q-1} \left(\frac{am^3 + b}{q}\right) = \sum_{m=0}^{q-1}\left(\frac{b}{q}\right) = q\left(\frac{b}{q}\right).$$
If $q=3$ or $q \equiv 2 \imod{3}$ then the map $x \rightarrow x^3$ from $\mathbb{F}_q \rightarrow \mathbb{F}_q$ is one to one (see \cite[Theorem 4.13]{Lev}). 
So if $q\nmid a$ we have 
$$\left(\frac{a^2}{q}\right)\sum\limits_{m=0}^{q-1} \left(\frac{am^3+b}{q}\right)  = \sum\limits_{k=0}^{q-1}\left(\frac{k^3 + a^2b}{q}\right)=0.$$
Finally assume that $q \nmid a$ and $q \equiv 1 \imod{3}$.
Then from \eqref{CubicLemma1} we have 
$$\left(\frac{a^2}{q}\right)\sum\limits_{m=0}^{q-1} \left(\frac{am^3+b}{q}\right) = \sum\limits_{k=0}^{q-1}\left(\frac{k^3 + a^2b}{q}\right) = \left(\frac{b}{q}\right)(1+\phi_{q,3}(\bar{a}^2\bar{b})).$$\end{proof}
Recall from Theorem \ref{Theorem 1} that for odd square-free $d$ the multiplicative function $a_d(f)$ is defined by
\begin{equation*}
a_d(f) = \frac{\sum_{r\imod{d}} \left(\frac{f(r)}{d} \right)}{\# \{ r\imod{d} | (f(r),d) = 1 \} }.
\end{equation*}
We next find a formula for $a_{q}(f)$ for odd prime $q$.
\begin{lemma}
\label{a_pCubic}
Let $q$ be an odd prime and $f(x)=ax^3+b$ where $(a, b)=1$.
$$a_q(f) = \begin{cases}
\left(\frac{b}{q}\right) & \text{if $q \mid a$}, \\
0 & \text{if $q \nmid a$ and $($ $q \mid b$ or $q = 3$ or $q \equiv 2 \imod{3}$ $)$}, \\
\frac{\left(\frac{b}{q}\right)(1 + \phi_{q,3}(\bar{a}^2\bar{b}))}{q-3} & \text{if $q \equiv 1 \imod{3}$, $a^2b$ is a cubic residue mod $q$}, \\
\frac{\left(\frac{b}{q}\right)(1 + \phi_{q,3}(\bar{a}^2\bar{b}))}{q} & \text{if $q \equiv 1 \imod{3}$, $a^2b$ is a cubic non-residue mod $q$}.
           \end{cases}
$$
\end{lemma}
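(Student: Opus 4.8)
The plan is to derive the formula for $a_q(f)$ directly from its definition by substituting the numerator evaluation from Lemma \ref{numerator} and computing the denominator $\#\{r \imod{q} \mid (f(r), q) = 1\}$ in each of the four cases, using Proposition \ref{Prop6.4.14} to count the roots of $ax^3 + b \equiv 0 \imod q$.

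First I would handle the case $q \mid a$. Here $f(r) = ar^3 + b \equiv b \imod q$ for every $r$, and since $(a,b)=1$ we have $q \nmid b$, so all $q$ residue classes are allowable and the denominator is $q$. By Lemma \ref{numerator} the numerator is $q\left(\frac{b}{q}\right)$, giving $a_q(f) = \left(\frac{b}{q}\right)$. Next, for $q \nmid a$ with $q \mid b$, or $q = 3$, or $q \equiv 2 \imod 3$, Lemma \ref{numerator} gives numerator $0$, so $a_q(f) = 0$ regardless of the denominator (which is nonzero: if $q \mid b$ then $q \nmid a$ forces $f$ to have exactly one root mod $q$, hence $q-1 \geq 1$ allowable classes when $q>2$; if $q=3$ or $q\equiv 2\imod 3$ the map $x\mapsto x^3$ is a bijection, so $f$ has exactly one root mod $q$, again leaving $q-1$ allowable classes, and $q-1 \neq 0$). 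Finally, for $q \equiv 1 \imod 3$ and $q \nmid ab$, the numerator is $\left(\frac{b}{q}\right)(1 + \phi_{q,3}(\bar a^2 \bar b))$ by Lemma \ref{numerator}; I then split on whether $a^2 b$ (equivalently $-b/a$ up to cubes, or more precisely whether $x^3 \equiv -b/a$ is solvable) is a cubic residue. By part (2) of Proposition \ref{Prop6.4.14}, if $-b\bar a$ — equivalently $a^2 b$, since $a^2 b \equiv (a^3)(b\bar a) $ and $a^3$ is always a cube — is a cubic residue mod $q$, then $ax^3 + b$ has three distinct roots mod $q$, so the denominator is $q - 3$; if it is a cubic non-residue, part (3) says $ax^3+b$ is irreducible, hence has no roots, and the denominator is $q$. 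Dividing the numerator by the appropriate denominator yields the last two cases.

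The only genuinely delicate point is the bookkeeping connecting the cubic-residue condition stated in the lemma ($a^2 b$ a cubic residue) with the condition under which $ax^3 + b \equiv 0 \imod q$ has solutions, namely that $-b\bar a$ be a cubic residue, and with the exponent condition $(a^2 b)^{(q-1)/3} \equiv 1$ appearing in Proposition \ref{Prop6.4.14} (which is stated there for the monic polynomial $x^3 - m$). Since $a^3 \equiv (a)^3$ is always a cubic residue, $a^2 b = a^3 \cdot (b \bar a)$ is a cubic residue if and only if $b\bar a$ is, if and only if $-b\bar a$ is (as $-1 = (-1)^3$ is a cube), so these conditions all agree; this is a short verification rather than a real obstacle. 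I expect the main obstacle, such as it is, to be simply stating this equivalence cleanly, after which the proof is an immediate case-by-case substitution.
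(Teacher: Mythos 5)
Your proposal is correct and follows essentially the same route as the paper: the numerator comes from Lemma \ref{numerator} and the denominator from counting roots of $ax^3+b$ modulo $q$ via Proposition \ref{Prop6.4.14}, with the observation that $a^2b$, $b\bar a$, and $-b\bar a$ all have the same cubic-residue status (the paper justifies the sign step by noting $(q-1)/3$ is even, while you note $-1=(-1)^3$ is a cube; both are fine). Your extra check that the denominator is nonzero in the vanishing-numerator case is a harmless elaboration of what the paper leaves implicit.
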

\begin{proof} The result follows from a straightforward application of Lemma \ref{numerator} and Proposition \ref{Prop6.4.14}. 
Note that when $q \equiv 1 \imod{3}$, $E$ is a cubic residue mod $q$ if and only if $E^\frac{q-1}{3} \equiv 1 \imod{q}$ (see \cite[Theorem 4.13]{Lev}).  Also since $q$ is a prime in the form $3k+1$ then $k=(q-1)/3$ is even, so $-a^2b$ is a cubic residue mod $q$ if and only if $a^2b$ is a cubic residue. 
\end{proof}
The following proposition is a simple consequence of Lemma \ref{a_pCubic} and the multiplicativity  of $a_d(f)$ for odd values of $d$.
\begin{proposition}
\label{a_d1Cubic}
Let $D_1$ be an odd square-free integer.
If $D_1$ has a prime divisor $q$ such that $q\nmid a$ and one of the conditions $q\mid b$,
$q=3$, or $q\equiv 2 \imod{3}$ holds, then $a_{D_1}(f)=0$. 
Otherwise
$$a_{D_1}(f)= \left(\frac{b}{(D_1,a)}\right)\prod\limits_{\stackrel{q \mid D_1, q\nmid a}{(1)}}\frac{\left(\frac{b}{q}\right)(1 + \phi_{q,3}(\bar{a}^2\bar{b}))}{q-3}\prod\limits_{\stackrel{q \mid D_1, q\nmid a}{(2)}}\frac{\left(\frac{b}{q}\right)(1 + \phi_{q,3}(\bar{a}^2\bar{b}))}{q},$$
%
where (1) is the condition that $q \equiv 1 \imod{3}$ and ${a}^2{b}$ is a cubic residue mod $q$ and (2) is the condition that $q \equiv 1 \imod{3}$ and ${a}^2{b}$ is a cubic non-residue mod $q$.
\end{proposition}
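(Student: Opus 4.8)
The plan is to derive Proposition \ref{a_d1Cubic} directly from Lemma \ref{a_pCubic} together with the multiplicativity of $d\mapsto a_d(f)$ on odd square-free integers, which was recorded in Theorem \ref{Theorem 1}. Write $D_1=q_1q_2\cdots q_t$ as a product of distinct odd primes. Since $a_d(f)$ is multiplicative, $a_{D_1}(f)=\prod_{i=1}^t a_{q_i}(f)$, so the whole statement reduces to reading off each local factor from Lemma \ref{a_pCubic} and regrouping the product according to the four cases listed there.

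First I would dispose of the vanishing case: if some prime divisor $q\mid D_1$ satisfies $q\nmid a$ and one of $q\mid b$, $q=3$, or $q\equiv 2\imod 3$, then by the second case of Lemma \ref{a_pCubic} we have $a_q(f)=0$, hence the product $\prod_i a_{q_i}(f)$ is $0$, giving $a_{D_1}(f)=0$. This is immediate.

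In the remaining case, every prime divisor $q$ of $D_1$ falls into exactly one of three buckets: (0) $q\mid a$, in which case $a_q(f)=\left(\frac{b}{q}\right)$; (1) $q\nmid a$, $q\equiv 1\imod 3$, and $a^2b$ a cubic residue mod $q$, in which case $a_q(f)=\left(\frac{b}{q}\right)(1+\phi_{q,3}(\bar a^2\bar b))/(q-3)$; or (2) $q\nmid a$, $q\equiv 1\imod 3$, and $a^2b$ a cubic non-residue mod $q$, in which case $a_q(f)=\left(\frac{b}{q}\right)(1+\phi_{q,3}(\bar a^2\bar b))/q$. (That these exhaust all possibilities under our standing assumption is exactly the statement that the vanishing case did not occur.) Multiplying over the primes in bucket (0) gives $\prod_{q\mid D_1,\,q\mid a}\left(\frac{b}{q}\right)=\left(\frac{b}{(D_1,a)}\right)$ by multiplicativity of the Kronecker/Jacobi symbol in its lower argument, and the primes in buckets (1) and (2) give precisely the two displayed products, with the conditions $(1)$ and $(2)$ as stated. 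Assembling the three contributions yields the claimed formula.

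I do not expect any genuine obstacle here — the proposition is a bookkeeping consequence of Lemma \ref{a_pCubic}. The only point that warrants a sentence of care is checking that the factorization $\left(\frac{b}{(D_1,a)}\right)=\prod_{q\mid D_1,\,q\mid a}\left(\frac{b}{q}\right)$ is legitimate, which holds because $(D_1,a)$ is odd and square-free (being a divisor of the odd square-free $D_1$), so it is exactly the product of the primes dividing both $D_1$ and $a$, and the Jacobi symbol is multiplicative in the denominator over such products. Everything else is just transcription of the four cases of the lemma into the partition of the prime divisors of $D_1$.
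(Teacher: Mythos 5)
Your proposal is correct and matches the paper's approach exactly: the paper presents this proposition as ``a simple consequence of Lemma \ref{a_pCubic} and the multiplicativity of $a_d(f)$ for odd values of $d$,'' which is precisely the bookkeeping argument you carry out, including the regrouping of the factor $\left(\frac{b}{(D_1,a)}\right)$ from the primes dividing both $D_1$ and $a$.
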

\subsection{Computing the Jacobsthal sum $\phi_{q,3}(E)$}
The formula for $a_{D_1}(f)$ given in Proposition \ref{a_d1Cubic} will be useful only if we can compute
the Jacobsthal sum $\phi_{q,3}(E)$ for  $q \equiv 1 \imod{3}$. In this section we obtain formulas to compute $\phi_{q,3}(E)$.
If $q\equiv 1 \imod{3}$ then there are integers $A$ and $B$ uniquely defined by 
$$q=A^2+3B^2,~~~A\equiv -1 \imod{3},~~~B>0.$$
(See \cite[Theorems 3.0.1 and 3.1.1]{BEW} for a proof.)
The following proposition provides convenient formulas for $\phi_{q,3}(E)$ in terms of the representation $q=A^2+3B^2$.

\begin{proposition}
\label{AB}
Let $E$ be an integer not divisible by prime $q\equiv 1 \imod{3}$, then 
$$\phi_{q,3}(E)=\left\{ \begin{array}{ll}
                    -1+2A&{\text {if}}~E^{(q-1)/3}\equiv 1\imod{q},\\
                     -1-A-3B & {\text {if}}~E^{(q-1)/3}\equiv (A-B)/2B\imod{q},\\
                     -1-A+3B & {\text {if}}~E^{(q-1)/3}\equiv (-A-B)/2B\imod{q}.\\
                   \end{array}
 \right.$$
\end{proposition}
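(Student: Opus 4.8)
The plan is to reduce the evaluation of $\phi_{q,3}(E)$ for arbitrary $E$ coprime to $q$ to three base cases indexed by the cubic residue character of $E$, and then evaluate those base cases via the classical connection between Jacobsthal sums and the representation $q = A^2 + 3B^2$. First I would recall that for $q \equiv 1 \imod 3$ the value of $\phi_{q,3}(E)$ depends only on the coset of $E$ in $\mathbb{F}_q^\times / (\mathbb{F}_q^\times)^3$: substituting $u \mapsto tu$ in the defining sum $\phi_{q,3}(E) = \sum_{u=1}^{q-1}\left(\frac{u}{q}\right)\left(\frac{u^3+E}{q}\right)$ and using that $t \mapsto t^3$ is injective when restricted to... actually more directly, replacing $u$ by $tu$ gives $\phi_{q,3}(E) = \left(\frac{t}{q}\right)\left(\frac{t^3}{q}\right)\phi_{q,3}(E/t^3) = \left(\frac{t}{q}\right)^4 \phi_{q,3}(E t^{-3}) = \phi_{q,3}(Et^{-3})$, since $\left(\frac{t^3}{q}\right) = \left(\frac{t}{q}\right)$. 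Hence $\phi_{q,3}$ is constant on cubic cosets, and there are exactly three values to determine, naturally indexed by whether $E^{(q-1)/3} \equiv 1$, or one of the two primitive cube roots of unity mod $q$. The two non-trivial cube roots of unity in $\mathbb{F}_q$ are $(-1 \pm \sqrt{-3})/2$; writing $\sqrt{-3}$ in terms of the representation $q = A^2+3B^2$ as $\sqrt{-3} \equiv A/B$ (this needs to be pinned down with the correct normalization $A \equiv -1 \imod 3$, $B > 0$, which the paper has already fixed), one identifies the two primitive cube roots as $(-A-B)/2B$ and $(A-B)/2B$ mod $q$, matching the case hypotheses in the statement.

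Next I would evaluate the three values. The cleanest route is through Jacobi sums: the number of points on $y^2 = x^3 + E$ over $\mathbb{F}_q$ is $q + \phi_{q,3}(E)$ (up to a standard correction term one checks carefully), and for $q \equiv 1 \imod 3$ this count is governed by a Jacobi sum $J(\chi_2, \chi_3)$ where $\chi_2$ is the quadratic and $\chi_3$ a cubic character; such a Jacobi sum has absolute value $\sqrt q$ and lies in $\mathbb{Z}[\zeta_3]$, and its "real part" type normalization yields $2A$ with $A \equiv -1 \imod 3$. Concretely, one can cite the classical evaluation (e.g. via \cite{BEW}) that when $E$ is a cubic residue, $\phi_{q,3}(E) = 2A - 1$, where $q = A^2 + 3B^2$ with $A \equiv -1 \imod 3$; this handles the first case. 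For the other two cases, twisting: if $E$ is replaced by $cE$ with $c$ a non-cube, then $\phi_{q,3}(cE)$ is obtained from $\phi_{q,3}(E)$ by multiplying the underlying Jacobi sum by the appropriate cubic Gauss-sum ratio, which rotates $2A$ to $-A \mp 3B$ according to which non-trivial cube root $c^{(q-1)/3}$ equals. One verifies the sign/assignment by checking the trace identity $\phi_{q,3}(\text{cubic res}) + \phi_{q,3}(c) + \phi_{q,3}(c^2) = -3$ (equivalently, summing $\phi_{q,3}(E)$ over a set of coset representatives together with the $E \equiv 0$ degenerate contribution telescopes), since $(2A-1) + (-A-3B-1) + (-A+3B-1) = -3$ is automatically consistent, and by one small explicit numerical instance (say $q = 7$, $A = -1$, $B = 1$) to fix which of $(-A-B)/2B$, $(A-B)/2B$ goes with $-A-3B$ versus $-A+3B$.

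The main obstacle I anticipate is the bookkeeping of normalizations and signs: the definition of $\phi_{q,3}$ used here, the precise correction term relating it to the point count on $y^2 = x^3 + E$, the sign conventions in the Jacobi-sum evaluation, and — most delicately — matching up the two primitive cube roots of unity mod $q$ with the two expressions $(A-B)/2B$ and $(-A-B)/2B$ given the fixed choice $A \equiv -1 \imod 3$, $B > 0$. All of this is ultimately a careful unwinding of standard material in \cite{BEW}, but getting every sign right without a reference computation is error-prone, so the proof will lean on citing the Jacobsthal-sum evaluations there and then verifying the coset assignment with one worked example and the trace constraint $\phi_{q,3}(1\text{-coset}) + \phi_{q,3}(\text{other two}) = -3$.
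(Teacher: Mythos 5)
The paper gives no argument here at all: its ``proof'' is the single line ``See \cite[Theorem 6.2.10]{BEW}'', and that theorem is verbatim the statement being proved, so the real comparison is between your reconstruction and the standard proof in \cite{BEW}. Your skeleton is the right one and is essentially what that reference does: the substitution $u\mapsto tu$ giving $\phi_{q,3}(E)=\phi_{q,3}(Et^{-3})$ is correct; the identification of the two primitive cube roots of unity modulo $q$ with $(A-B)/2B$ and $(-A-B)/2B$, via $\sqrt{-3}\equiv \pm A/B$ coming from $A^{2}\equiv -3B^{2}\pmod{q}$, is correct; and the trace identity $\sum_{E\not\equiv 0}\phi_{q,3}(E)=-(q-1)$, hence $(2A-1)+(-A-3B-1)+(-A+3B-1)=-3$, is a valid consistency check. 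Two caveats. First, your numerical anchor is miscomputed: for $q=7$ the normalized representation is $A=2$, $B=1$ (your $A=-1$, $B=1$ gives $q=4$); with the correct values the check does go through, e.g.\ $\phi_{7,3}(1)=3=-1+2A$ and $\phi_{7,3}(2)=-6=-1-A-3B$, consistent with $2^{2}\equiv (A-B)/2B\pmod{7}$. Second, the only genuinely delicate point --- which of the two nontrivial cubic cosets receives $-A-3B$ and which receives $-A+3B$, uniformly in $q$ --- is not settled by the trace identity plus a single worked prime; it requires the Jacobi-sum normalization (the congruence in $\mathbb{Z}[\zeta_3]$ fixing $A\equiv -1\pmod{3}$ and tying the sign of $B$ to the choice of cubic character), which you correctly name as the crux but then defer to \cite{BEW}. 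Since the paper defers the entire proposition to that same reference, your route is not really different from the paper's; it just makes explicit the reductions the citation hides, and it would be complete once the example is corrected and the sign normalization is either carried out or explicitly cited.
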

\begin{proof}
See \cite[Theorem 6.2.10]{BEW}.
\end{proof}
Formulas given in Propositions \ref{AB} can be used in the implementation of our upcoming cubic procedure.
\subsection{Condition for $\tau_D^{-}(f)=1$}
We employ Propositions \ref{AB} to find a condition on the possible values of $D$ with $\tau_D^{-}(f) = 1$.
\begin{proposition} 
\label{cubic condition}
Assume that $f(x)=ax^3+b$ produces infinitely many primes and that the primes produced by $f(x)$ are uniformly distributed among allowable congruence classes. Then
$\tau_D^{-}(f) = 1$ implies $D \mid 56a$.
\end{proposition}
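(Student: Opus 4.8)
The plan is to mimic the proof of Proposition~\ref{linearcondition} (the linear case) but using the cubic formula for $a_{D_1}(f)$ from Proposition~\ref{a_d1Cubic} and the structure of Theorem~\ref{Theorem 1}. First I would argue about the odd square-free part. If $\tau_D^{-}(f)=1$, then by Theorem~\ref{Theorem 1} we must have $a_{D_1}(f)\neq 0$, where $D_1>1$ is the largest odd square-free divisor of $D$. By Proposition~\ref{a_d1Cubic}, $a_{D_1}(f)=0$ whenever $D_1$ has a prime divisor $q$ with $q\nmid a$ and one of $q\mid b$, $q=3$, or $q\equiv 2\imod{3}$. Hence every prime $q\mid D_1$ either divides $a$, or satisfies $q\equiv 1\imod{3}$, $q\nmid b$. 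In the latter case I would analyze the surviving factor $\frac{(\frac{b}{q})(1+\phi_{q,3}(\bar a^2\bar b))}{q-3}$ or $\frac{(\frac{b}{q})(1+\phi_{q,3}(\bar a^2\bar b))}{q}$: for this to be nonzero we need $1+\phi_{q,3}(\bar a^2\bar b)\neq 0$, and from Proposition~\ref{AB}, $1+\phi_{q,3}(E)\in\{2A,\,-A-3B,\,-A+3B\}$. The key arithmetic fact is that for $q=A^2+3B^2$ with $A\equiv -1\imod 3$ and $B>0$, one has $2A\neq 0$ always (since $A\equiv -1\bmod 3$ forces $A\neq 0$), while $-A\pm 3B=0$ would force $q=3B^2+9B^2=12B^2$ or similar, which is impossible for $q$ prime unless $q$ is small. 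Actually the only exceptional prime is $q=7=A^2+3B^2$ with $A=2,B=1$, where $-A+3B=1\neq 0$ but... I would check carefully which small $q\equiv 1\imod 3$ can make one of the three values vanish; the upshot should be that the only obstruction forcing $q\nmid a$ yet $q\mid D_1$ is $q=7$ (hence the factor of $7$ in $56a$), while all other such primes must divide $a$, giving that the odd square-free part $D_1\mid 7a$.

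Next I would pass from $D_1$ to $D$ itself, exactly as in Proposition~\ref{linearcondition}. Either $D$ is odd, in which case $D=D_1\mid 7a$ and we are done; or $D=4D_1$ with $D_1\equiv 3\imod 4$, or $D=8D_1$ with $D_1$ odd. In the $D\equiv 4\imod 8$ case, the formula \eqref{tau} shows $\tau_D^-(f)=1$ forces one of $\alpha_1=\alpha_5=0$ or $\alpha_3=\alpha_7=0$ (using $a_{D_1}(f)=\pm\text{(something nonzero)}$; here I need to be a bit careful since in the cubic case $a_{D_1}(f)$ is not necessarily $\pm 1$ — it is a rational number — but the argument only needs that the relevant linear combination of $\alpha_j$'s vanishes, and since $\alpha_1+\alpha_3+\alpha_5+\alpha_7=1$, forcing $2\tau_D^-=1$ with a nonzero coefficient on $a_{D_1}$ still forces the bracket to vanish, hence two of the $\alpha_j$ vanish). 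From $\alpha_j=0$ for some $j$ I conclude that $ax^3+b\equiv j\imod 8$ has no solution; since the cube map on $\bZ/8\bZ$ is a bijection (as $\gcd(3,\varphi(8))$... actually $x^3$ on $(\bZ/8)^\times$ is the identity and on all of $\bZ/8$ it permutes the odd residues while cubing evens), a residue is missed precisely when $2\mid a$, and then examining $b\bmod 8$ (using $(a,b)=1$ so $b$ odd) pins down whether $4\mid a$ or $8\mid a$, exactly paralleling the linear proof. This yields $D\mid 8D_1\mid 8\cdot 7a=56a$ in all cases.

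The main obstacle I expect is the small-prime bookkeeping in the first step: precisely determining, for primes $q\equiv 1\imod 3$ with $q\nmid b$, when the Jacobsthal-sum factor $1+\phi_{q,3}(\bar a^2\bar b)$ can vanish, i.e. when $q=A^2+3B^2$ admits $2A=0$, $-A-3B=0$, or $-A+3B=0$. Since $A\neq 0$ the first never vanishes; $-A=\pm 3B$ combined with $q=A^2+3B^2=9B^2+3B^2=12B^2$ is impossible for prime $q$ (as $12B^2$ is composite for $B\geq 1$), so in fact \emph{no} such $q$ makes the factor vanish from the $\pm A\pm 3B=0$ route — but I must double-check the boundary behaviour at $q=7$ and whether the relevant cubic-residue branch of Proposition~\ref{AB} is even attained, and also handle the case $q=3$ (excluded from $D_1$'s relevant primes only via $q\mid a$). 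A careful reconciliation shows the extra factor beyond $a$ in the odd part is exactly $7$, and combined with the power-of-$2$ analysis this gives $D\mid 56a$. I would also note at the outset the degenerate case $D_1=1$: then $D\in\{-4,\pm 8\}$ and one checks directly (or via the $\alpha_j$ argument with $a_{D_1}=1$) that $\tau_D^-(f)=1$ forces $D\mid 8\mid 56a$, so the hypothesis $D_1>1$ in Theorem~\ref{Theorem 1} is not actually a restriction here.
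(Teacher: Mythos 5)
Your overall skeleton (use Theorem \ref{Theorem 1} to constrain $a_{D_1}(f)$, then Proposition \ref{a_d1Cubic} and Proposition \ref{AB} to constrain the primes dividing $D_1$, then handle the power of $2$) is the right one, but the first and decisive step is carried out with the wrong condition, and this is a genuine gap. You work throughout with ``$a_{D_1}(f)\neq 0$'' and accordingly ask when $1+\phi_{q,3}(\bar a^2\bar b)$ can \emph{vanish}; your own computation then shows it essentially never vanishes for $q\equiv 1\imod 3$, $q\nmid ab$. But that conclusion gives \emph{no} restriction on such primes dividing $D_1$ --- it is consistent with $D_1$ containing arbitrarily many primes $q\equiv 1\imod 3$ prime to $ab$, which would contradict $D_1\mid 7a$. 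The assertion that ``a careful reconciliation shows the extra factor beyond $a$ is exactly $7$'' is exactly the step your argument cannot produce: nonvanishing is the wrong dichotomy, and the factor $7$ does not come from a vanishing analysis at all.

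The correct mechanism, which you explicitly disclaim in your second paragraph, is that $\tau_D^-(f)=1$ forces $a_{D_1}(f)=\pm 1$, not merely $a_{D_1}(f)\neq 0$. Indeed $2\tau_D^-(f)=2$ and each case of \eqref{tau} writes $2\tau_D^-(f)=1+\epsilon\, a_{D_1}(f)$ with $|\epsilon|\le 1$ (since $\alpha_1+\alpha_3+\alpha_5+\alpha_7=1$ and $\alpha_j\ge 0$) and $|a_{D_1}(f)|\le 1$ (it is a character sum divided by the number of its nonzero terms); a product of two quantities of modulus at most $1$ equals $1$ only if both are $\pm1$. By multiplicativity and $|a_q(f)|\le 1$, every prime $q\mid D_1$ must then satisfy $|a_q(f)|=1$. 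For $q\nmid a$ this means, via Lemma \ref{a_pCubic} and Proposition \ref{AB}, that $|2A|=q-3$ or $|-A\pm 3B|=q$; since $|A|\le\sqrt q$ and $|B|\le\sqrt{q/3}$ these force $4q\ge (q-3)^2$, respectively $q\le(1+\sqrt 3)\sqrt q$, and the only prime $q\equiv 1\imod 3$ satisfying either inequality is $q=7$. That is where $7$ enters. Your closing mod-$8$ analysis is also unnecessary for this proposition: once $D_1\mid 7a$, the fact that a fundamental discriminant has $2$-part dividing $8$ already gives $D\mid 8D_1\mid 56a$, with no need to show that any $\alpha_j$ vanishes.
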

\begin{proof}
From the definition of $\alpha_j$ in Theorem \ref{Theorem 1}, we conclude that $(\alpha_3 + \alpha_7 - \alpha_1 - \alpha_5)$, $(\alpha_3 + \alpha_5 - \alpha_1 - \alpha_7)$, and $(\alpha_5 + \alpha_7 - \alpha_1 - \alpha_3)$  are at most $1$ and at least $-1$. Thus from  \eqref{tau} we deduce that if $\tau_D^{-}(f) = 1$, then  $a_{D_1}(f) = \pm 1$, where $D_1>1$ is the largest odd square free divisor of $D$.

Let $q$ be a divisor of $D_1$ such that $q\nmid a$. From Lemma \ref{a_pCubic} we know that the
only possible non-zero value of $a_q(f)$ are
$$\text{either}~~\frac{(\frac{b}{q}) (1+\phi_{q,3}(\bar{a}^2 \bar{b}))}{q-3},~~\text{or}~\frac{(\frac{b}{q}) (1+\phi_{q,3}(\bar{a}^2 \bar{b}))}{q}.$$
In the former case $q\equiv 1 \imod{3}$ and $a^2b$ is a cubic residue mod $q$ and
in the latter case $q\equiv 1 \imod{3}$ and $a^2b$ is a cubic non-residue mod $q$.
From Proposition \ref{AB} we know that if $a^2 b$ is a cubic residue mod $q$ then
$(1 + \phi_{q,3}(\bar{a}^2\bar{b}))$ is equal to $2A$. So in this case $a_q(f)=\pm 1$ implies 
$$\frac{\left| \left(\frac{b}{q}\right)(1 + \phi_{q,3}(\bar{a}^2\bar{b}))\right|}{q-3}=\frac{\left|\left(\frac{b}{q}\right)(2A)\right|}{q-3}=1.$$
Since $|A| \leq \sqrt{q}$ (recall
that $q=A^2+3B^2$), from the above identity we conclude that $2\sqrt{q} \geq q-3$ and so $4q\geq (q-3)^2$.
Because $q \equiv 1 \imod{3}$, this is only true if $q = 7$. 

Next if $a^2 b$ is a cubic non-residue modulo $q$, then from Lemma \ref{a_pCubic} we conclude that if $a_q(f)=\pm 1$ then 
\begin{equation}
\label{term}
\frac{\left|\left(\frac{b}{q}\right)(1 + \phi_{q,3}(\bar{a}^2\bar{b}))\right|}{q} = 1,
\end{equation}
which implies $|1 + \phi_{q,3}(\bar{a}^2\bar{b})| = q. $
From Proposition \ref{AB} we know that  $1 + \phi_{q,3}(\bar{a}^2\bar{b})$ is equal to $-A\pm 3B$. 
%
%
So $|1 + \phi_{q,3}(\bar{a}^2\bar{b})| = |-A\pm3B|=q$. Since $|A| \leq \sqrt{q}$ and $|B| \leq (1/\sqrt{3}) \sqrt{q}$ (recall
that $q=A^2+3 B^2$) we have $q=|-A\pm3B|\leq (1+\sqrt{3})\sqrt{q}.$
Because $q \equiv 1 \imod{3}$, this is only true if $q = 7$. 

In summary if $q\nmid a$ and $a_q(f)=\pm 1$, then $q=7$. This shows that if 
$a_{D_1}(f) = \pm 1$, then $D_1 \mid 7a$. 
Finally since $D$ is a fundamental discriminant, $8$ is the greatest power of $2$ that divides $D$. This implies that if $\tau_D^{-}(f) = 1$,  then $D \mid 56a$.
\end{proof}

The above result gives us a convenient way to find a $D$ such that $\tau_D^{-}(f) = 1$.
\subsection{The Cubic Procedure}
We are ready to present our algorithm for finding prime producing cubic polynomials $f(x)=ax^3+b$ and integers $g$ with large $\delta(f)$ and $\ell_g(f)$.
\medskip\par
\noindent{\bf Cubic Procedure}
\begin{enumerate}
\item Select coprime integers $A>0$ and $B$ such that:

(i) The smallest prime factor of $B$ is large.

(ii) $3$ and many consecutive primes $q\equiv 2 \imod{3}$ divide $A$.

(iii) For many consecutive primes $q\equiv 1 \imod{3}$ we have $(A^2B)^\frac{q-1}{3} \not\equiv 1 \imod{3}$.

 \item Set $a=2^\alpha A$, $b=2^\alpha B+1$, and choose $\alpha$ such that $(a,b) = 1$ and $a^2b$ is not a perfect cube. Then form $f(x)=ax^3+b$.
 \item Search through divisors $D$ of $56a$ that are fundamental discriminants and find ones satisfying $\tau_D^{-}(f)=1$. Then select $g$ such that  $\mathbb{Q}(\sqrt{g})$ has fundamental discriminant $D$.
 \item Compute $\ell_g(f)$.
\end{enumerate}

We briefly explain why this works. Recall that our aim is to make $\delta(f)$ in \eqref{cubicdf} as close as possible to $1$. In order to do this we need to ensure that $\#\{s \imod{q} | f(s) \equiv 1 \imod{q}\}$ is zero for as many small primes $q$ as possible. 
For $q=3$ and $q \equiv 2 \imod{3}$,  the equation $f(n) \equiv 1 \imod{q}$ has no solutions only if $q \mid a$ and $q \nmid b-1$. Now because of our choice of $A$, we have that many such small primes (i.e. $3$ and odd primes $q\equiv  2 \imod{3}$) divide $A$. Since $(a, b-1) = 2^\alpha(A,B) = 2^\alpha$ we conclude that such $q$ does not divide $b-1$. Thus  $\#\{s \imod{q} | f(s) \equiv 1 \imod{q}\}$ is zero for such small prime $q$ as we required.
For $q \equiv 1 \imod{3}$, we have
$$(a^2(b-1))^\frac{q-1}{3}=(2^{q-1})^\alpha (A^2B)^\frac{q-1}{3}
 \equiv (A^2B)^\frac{q-1}{3} \imod{q}.$$
Since $A$ and $B$ are such that $(A^2B)^\frac{q-1}{3} \not \equiv 1 \imod{q}$ for many small $q \equiv 1 \imod{3}$, expression in \eqref{cubicdf} shows that such a $q$ does not reduce the value of $\delta(f)$. So we expect a large value for $\delta(f)$. So for this $f$ and $g$ found in step (3) of the procedure we expect to obtain a large $\ell_g(f)$.

We implemented this procedure and found many examples of cubics $f(x)$ and integers $g$ with large $\delta(f)$ and $\ell_g(f)$. A sample of our findings is given in Table \ref{CubicTable}.
Note that these polynomials are all in the form $a(x+d)^3+b$ for integers $b$ and $d$ and $${a=2^3\times 3\times \prod_{\substack{{5\leq q \leq 113}\\{q\equiv 2 \imod{3}}}} q}.$$ 
\begin{table}[ht]
\centering


\begin{tabular}{|l|l|l|l|}
\hline
$f(x)$ & $g$ &   $\ell_g(f)$&  $\delta(f)$ \\ \hline

\scriptsize $16735790906636782452200520x^3 \hspace{-1pt} + \hspace{-1pt} 41975422096126566714360524823960x^2+\hspace{-1pt}$ & & & \\
\scriptsize $35093173864667750962440687534348342360x+$ & \scriptsize $11045$ &	\scriptsize $10011$ & \scriptsize $0.999103$\\
\scriptsize $9779777390330230394129958282301374167637377$ & & &\\ \hline

\scriptsize $16735790906636782452200520x^3 \hspace{-1pt} + \hspace{-1pt} 35691015460148108446082064160320x^2+\hspace{-1pt}$ & & &\\
\scriptsize $25371743542186406147283249113774999040x+$ & \scriptsize $3380$ & \scriptsize $9938$ & \scriptsize $0.999103$\\
\scriptsize $6012020691773711636910512621335820375159417$ & & &\\ \hline

\scriptsize $16735790906636782452200520x^3 \hspace{-1pt} + \hspace{-1pt} 13889869662963197596203821574000x^2+\hspace{-1pt}$ & & &\\
\scriptsize $3842632442258768614989787238447100000x+$ & \scriptsize $45$ & \scriptsize $9472$ & \scriptsize $0.999103$\\
\scriptsize $354354755050296112445641546505463407638457$ & & &\\ \hline	

\scriptsize $16735790906636782452200520x^3 \hspace{-1pt} + \hspace{-1pt} 8188671868499217922819644631320x^2+\hspace{-1pt}$ & & &\\
\scriptsize $1335547815736616945558115580434398040x+$ & \scriptsize $1445$ & \scriptsize $8499$ & \scriptsize $0.999103$\\
\scriptsize $72607947367731671323230658940703008348417$ & & &\\ \hline

\scriptsize $16735790906636782452200520x^3 \hspace{-1pt} + \hspace{-1pt} 39188360629071623422248215626800x^2+\hspace{-1pt}$ & & & \\
\scriptsize $30587691121809274229767399743186204000x+$ & \scriptsize $10125$ & \scriptsize $8243$ & \scriptsize $0.999103$\\
\scriptsize $7958203517101930938186782840516375938678457$ & & &\\ \hline
\end{tabular}
\hskip2cm
\caption{Cubic Polynomials With Long Artin Prime Production Lengths}
\label{CubicTable}
\end{table}

\section{Concluding Remarks}
\label{Concluding Remarks}

In this paper we focused on the problem of maximizing $\delta(f)$, as defined in \eqref{deltaf} for any prime producing polynomial $f(x)$, when $f(x)$ varies over prime producing polynomials of fixed degrees. We note that  $\delta(f)$ is well defined for any polynomial $f(x)$ with the property that $N_q(f)\neq q$ for any primes $q\geq3$. From the above investigations we speculate that
$$\sup_{f,{\rm deg}(f)=n} \delta(f) =1,$$
for $n=1, 2$, or $3$. More generally one can ask the following question:
\begin{question}
\label{question 1}
Is it true that $\sup_{f,{\rm deg}(f)=n} \delta(f) =1$ and $\inf_{f,{\rm deg}(f)=n} \delta(f)=0$?
\end{question}
It turns out that the answer to the infimum question for the linear, quadratic, and cubic polynomials is simple. 
We need only to consider the term
$$\prod\limits_{\stackrel{q> 2}{q \mid (a,b-1)}}\left(1-\frac{1}{q}\right)$$
that is present in equations \eqref{lineardelta}, \eqref{cubicdf}, and 
in \eqref{Quadratic Delta 2} for $f(x)=ax^2+b$.
Since $\prod_{q>2} (1-1/q)=0$ by defining integer $a$ as a product of consecutive primes starting from $3$  and setting $b = a+1$, we find $f(x)=ax^n+b$ with $\delta(f)$ arbitrarily close to $0$. So
$\inf_{f,{\rm deg}(f)=n} \delta(f)=0,$
for $n=1, 2,$ or $3$. 

The answer to the supremum question for polynomials of degree $n$ is positive. In order to see this, for $y>0$ let 
$q_1, q_2, \ldots, q_m$ be all the odd primes not exceeding $y$. Let $a=q_1q_2 \ldots q_m$. Take an integer $b$ such that $b\equiv 2\imod{a}$. Note that $(a, b)=(a, b-1)=1$. Form $f_y(x)=ax^n+b$. From \eqref{deltaf} we have $$\delta(f_y)=\prod_{q>y} \left( 1-\frac{1}{q(q-N_q(f_y))}\right),$$ 
where $N_q(f_y)\neq q$. It is clear that $\delta(f_y)\rightarrow 1$ as $y\rightarrow\infty$. 

We also note that it is possible to construct quadratic polynomials $f_y(x)=x^2+bx+c$ with $b\neq 0$ and $\delta(f_y)$ arbitrarily close to one.  In order to do this let $\Delta$ be an integer with the property that $\left(\frac{\Delta}{q_i}\right)=-1$ for $i=1, \ldots, m$ (the existence of such $\Delta$ is a consequence of the law of quadratic reciprocity, see \cite{LLS} for details). Choose integers  $b^\prime\neq 0$ and $c$ such that
$$(b^\prime)^2-\Delta=c-1,$$
and set $b=2b^\prime$.
Now for $f_y(x)=x^2+bx+c$ from \eqref{Quadratic Delta 2} we have 
\begin{equation*}
\delta(f_y) =  \prod\limits_{{q > y}}\left(1 - \frac{1 + \left( \frac{\Delta}{q}\right)}{q\left(q-1-\left(\frac{\Delta}{q}\right)\right)}\right).
\end{equation*}
It is clear that $\delta(f_y)\rightarrow 1 $ as $y\rightarrow\infty$. 

We can also consider the following question.
\begin{question}
\label{question 2}
Is it true that for any polynomial $f(x)$ we have $0<\delta(f)<1$?
\end{question}
For linear $f(x)$ the answer is yes  by 
\eqref{lineardelta} and the fact that we have $$0<\prod_{q} \left(1-\frac{1}{q(q-1)}\right)=A<1.$$ 
In \cite[Proposition 4]{M2}, it is proved that $\delta(f)<1$ for quadratic polynomials.

The next question is motivated by Question \ref{question 1} and the relation \eqref{expected} between $\delta_g(f)$ and $\ell_g(f)$.
\begin{question}
\label{question 3}
Is it true that $\displaystyle{\sup_{\substack{{g\in \mathbb{Z}}\\{f,{\rm deg}(f)=n}}}\ell_g(f)=\infty}$?
\end{question}
In \cite[Theorem 2]{M2} it is conditionally proved that for quadratic polynomials the answer to the above question is positive. It appears that the proof extends to prime producing polynomials of the form $ax^n+b$.

Another question motivated by the size of the leading coefficients $a(f)$ of polynomials $f(x)$ in our findings in this research is the following.

\noindent {\bf Question 6.4} Is it true that $\displaystyle{\sup_{\substack{{g\in \mathbb{Z}}\\{f, {\rm deg}(f)=n}}} \frac{\ell_g(f)}{a(f)}}=\infty$?

Following the procedure described in this paper one may speculate that for linear polynomials $f(x)=ax+b$  the quantity ${\ell_g(f)}/{a(f)}$ is bounded. Note that for a prime producing polynomial $f_{y}(x)=ax+b$, where $a$ is the product of all the odd primes $\leq y$ and $(a, b-1)=1$, and a suitable $g$ coming from $\tau_D^-(f_y)=1$, the ratio of the expected value of $\ell_g(f_y)$ by $a(f_y)$ can be estimated as
$$\frac{\delta(f_y)}{a(f_y)(1-\delta(f_y))}=\frac{\prod_{q>y} \left( 1-\frac{1}{q(q-1)}\right)}{\left( \prod_{3\leq q \leq y} q \right) \left( 1-  \prod_{q>y}\left( 1-\frac{1}{q(q-1)}\right)\right)}\approx \frac{1-1/y\log{y}}{e^y (1/y\log{y})}.$$
It is clear that the latter expression approaches zero as $y\rightarrow \infty$.
So motivated by this observation we may speculate that the answer to the above question for linear polynomials is negative. In contrast, our investigations for the quadratic case leave open the possibility of the  existence of  sequences of quadratics $f_n$ and integers $g_n$ with the property that $\ell_{g_n}(f_n)/a(f_n)\rightarrow \infty$ as $n\rightarrow \infty$.




Finally, problems similar to the one discussed in \cite{M2} and this paper can be considered for primes generated by a family of polynomials. For example for two polynomials $f_1(x)$ and $f_2(x)$ and a fixed integer $g$, we can consider integers $n$ where both $f_1(n)$ and $f_2(n)$ are prime. Then the Artin prime production length $\ell_g(f_1, f_2)$ is the number of such $n$ in a row where both primes have $g$ as a primitive root. One can develop procedures, in line with the one developed in this paper for the case of a single polynomial, for finding integers $g$ and polynomials $f_1(x)$ and $f_2(x)$ with large $\ell_g(f_1, f_2)$. We have done some preliminary experiments for the case of two quadratic polynomials.
We present a sample of our results in Table \ref{MultipleTable}.

\begin{table}[ht]
\centering

\begin{tabular}{|l|l|l|}
\hline
$f_1(x)$, $f_2(x)$ & $g$ & $\ell_g(f_1, f_2)$ \\ \hline

\footnotesize $x^2+77851376x+9829839069358873548$ & & \\
\footnotesize $x^2+77851376x+5695745484831292308$ & \footnotesize $7203$ & \footnotesize $11966$ \\ \hline

\footnotesize $x^2+24444296x+9828473241074334108$  & & \\
\footnotesize $10597x^2+259036204712x+1583526759288000168$ & \footnotesize $108$ & \footnotesize $10724$ \\ \hline

\footnotesize $x^2+65043728x+13599916185850506684$  & & \\
\footnotesize $x^2+65043728x+6850377136300469580$ & \footnotesize $21675$ & \footnotesize $10043$ \\ \hline

\footnotesize $x^2+64233308x+13599889993676627904$  & & \\
\footnotesize $x^2+64233308x+6850350944126590800$ & \footnotesize $48$ & \footnotesize $9340$ \\ \hline

\footnotesize $x^2+4206728x+13598862938352588684$  & & \\
\footnotesize $x^2+4206728x+6849323888802551580$ & \footnotesize $3468$ & \footnotesize $9247$ \\ \hline
\end{tabular}
\hskip2cm
\caption{Pair of Quadratics With Long Artin Prime Production Lengths}
\label{MultipleTable}
\end{table}

\newpage
\noindent {\bf Acknowledgements}.   The authors thank Pieter Moree for his suggestions and encouragement on this work and for his detailed comments on an earlier draft of this paper. We are also grateful to Adam Felix and Michael Jacobson for their comments on an earlier draft of this paper. We also thank the referee for many helpful comments and suggestions.

\end{document}